\newtheorem{theorem}{Theorem}[section]
\newtheorem{lemma}[theorem]{Lemma}
\newtheorem{proposition}[theorem]{Proposition}
\newtheorem{corollary}[theorem]{Corollary}
\theoremstyle{definition}
\newtheorem{definition}[theorem]{Definition}
\newtheorem{example}[theorem]{Example}
\theoremstyle{remark}
\begin{document}

\title[ $C^*$-convexity of norm unit balls ]{ $C^*$-convexity of norm unit balls}

\author[ M. Kian]{ Mohsen Kian}

\address{Mohsen Kian:  Department of Mathematics,  Faculty of Basic Sciences,  University of Bojnord, P.O. Box 1339, Bojnord 94531,  Iran.}
\address{School of Mathematics, Institute for Research in Fundamental Sciences (IPM), P.O. Box: 19395-5746, Tehran, Iran.}
\email{ kian@ub.ac.ir and kian@member.ams.org}

\subjclass[2010]{47A30, 15A60, 47A05}

\keywords{$L$-type and $M$-type norms, $C^*$-convex set,  unit ball, dual norm }

\begin{abstract}
We determine those norms on $\mathbb{B}(\mathcal{H})$ whose unit ball is $C^*$-convex. We call them $M$-norms and investigate their dual norms, say $L$-norms. We show that ``the class of $L$-norms  greater than a given norm   enjoys a  minimum element" and ``the class of $M$-norms  less than a given norm  enjoys a  maximum element". These minimum and maximum elements will be determined in some cases.  Finally, we give a constructive result to obtain   $M$-norms  and   $L$-norms on $\mathbb{B}(\mathcal{H})$.

\end{abstract} \maketitle

\section{Introduction}
Throughout this paper assume that $\mathbb{B}(\mathcal{H})$ is
 the $C^*$-algebra of all bounded linear operators on a Hilbert space $\mathcal{H}$ and $I$ denotes the identity operator on $\mathcal{H}$. If $dim \mathcal{H}=n$, then we identify $\mathbb{B}(\mathcal{H})$ with $\mathbb{M}_n$, the $C^*$-algebra of all $n\times n$  matrices with complex entries. We mean by $\mathbb{L}^1(\mathcal{H})$ the $*$-algebra of all trace class operators on $\mathcal{H}$.

  A subset $\mathscr{K}$ of $\mathbb{B}(\mathcal{H})$ is called $C^*$-convex if $A_1,\ldots,A_k\in\mathscr{K}$ and
  $C_1,\ldots,C_k\in\mathbb{B}(\mathcal{H})$ with $\sum_{i=1}^{k}C_i^*C_i=I$ implies that $\sum_{i=1}^{k}C_i^*A_iC_i\in\mathscr{K}$.  This kind of convexity has been introduced  by Loebl and Paulsen \cite{Loebl} as a non-commutative generalization of the linear convexity.    For example the sets
$\{T\in\mathbb{B}(\mathcal{H});\,0\leq T\leq I\}$ and  $\{T\in\mathbb{B}(\mathcal{H});\,\|T\|\leq M\}$
are $C^*$-convex.
 It is evident that the $C^*$-convexity of a set $\mathscr{K}$ in $\mathbb{B}(\mathcal{H})$  implies its  convexity in the usual sense. But the converse is not true in general.  Various   examples and some basic properties of  $C^*$-convex sets were presented in \cite{Loebl}.

In recent decades, many operator algebraists paid their attention to extend various concepts to non-commutative cases. Regarding these works, the notion of $C^*$-convexity in $C^*$-algebras has been established as a non-commutative generalization of the linear convexity in linear spaces.    Some essential results of convexity theory have been  generalized in \cite{EW} to $C^*$-convexity. In particular, a version of the so-called Hahn--Banach theorem was presented. The operator extension of extreme points, the $C^*$-extreme points have also been introduced and studied, see \cite{F,  FM, FZ, Hop, Loebl, M}. Moreover,  Magajna  \cite{Magajna} established  the notion of $C^*$-convexity for operator modules and proved some separation theorems. We refer the reader to  \cite{Morenz,Magajna2, WW} for further results concerning $C^*$-convexity.

    The main aim of the present work is to determine those norms whose  unit ball is $C^*$-convex (and their dual norms). We call them $M$-norms.

In Section 2 we introduce $M$-norms and $L$-norms and give their properties and connections between.

In Section 3, we will show that ``the class of $L$-norms which are greater than an arbitrary norm   $\|\cdot\|$, enjoys a  minimum element" and ``the class of $M$-norms which are less than an arbitrary norm   $\|\cdot\|$, possesses  a  minimum element".  We determine this minimum and maximum elements in some cases. For example, we will show that the trace norm $\|\cdot\|_1$  is the minimum element in the class of $L$-norms
greater than the operator norm   $\|\cdot\|_{\infty}$.

In Section 4, we give a  constructive result to obtain $M$-norms.
%%========================================================================
\section{$M$-type and $L$-type Norms}
Let $\|\cdot\|$ be a  norm on $\mathbb{B}(\mathcal{H})$ such that its unit ball  $\mathscr{B}_1=\{A\in\mathbb{B}(\mathcal{H}); \ \|A\|\leq 1\}$ is $C^*$-convex. Then
$$\left\|\sum_{i=1}^{k}C_i^*A_iC_i\right\|\leq 1 \qquad \left( \|A_i\|\leq 1, \qquad \sum_{i=1}^{k}C_i^*C_i=I \right),$$
which further implies that
  \begin{align}\label{c1}
    \left\|\sum_{i=1}^{k}C_i^*X_iC_i\right\|\leq\max_{1\leq i\leq  k}\|X_i\| \qquad\left(   X_i\in\mathbb{B}(\mathcal{H}), \qquad\ \sum_{i=1}^{k}C_i^*C_i=I\right),
  \end{align}
since $\left(\max_{1\leq j\leq k}\|X_j\|\right)^{-1}X_i\in\mathscr{B}_1$ for all $i=1,\ldots, k$. On the other hand, if the equation \eqref{c1} holds, then for all $X_i\in\mathscr{B}_1$ and $C_i\in\mathbb{B}(\mathcal{H})$  with $\sum_{i=1}^{k}C_i^*C_i=I$ we obtain $\left\|\sum_{i=1}^{k}C_i^*X_iC_i\right\|\leq 1$ and so  $\mathscr{B}_1$ is $C^*$-convex.  Let us  give a name to such norms.
\begin{definition}
We say that a norm $\|\cdot\|$ on $\mathbb{B}(\mathcal{H})$ is  a $M$-norm (is of $M$-type) if the equation \eqref{c1} holds true.
  \end{definition}
The motivation  of the next definition will be revealed soon.
\begin{definition}
 We say that a norm $\|\cdot\|$ on $\mathbb{B}(\mathcal{H})$ is a $L$-norm (is of $L$-type) if
   \begin{align}\label{cd}
  \sum_{i=1}^{k}\|C_iXC_i^*\|\leq \|X\| \qquad   \left( X\in\mathbb{B}(\mathcal{H}), \quad  \sum_{i=1}^{k}C_i^*C_i=I\right).
  \end{align}
\end{definition}
%=======================================================================
The discussion above yields that:
 \begin{lemma}\label{lem1}
  Let $\|\cdot \|$ be a norm on $\mathbb{B}(\mathcal{H})$. The unit ball of $\|\cdot \|$ is $C^*$-convex if and only if   $\|\cdot\|$ is  a  $M$-norm.
\end{lemma}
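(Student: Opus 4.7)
The proof plan is straightforward because both implications are essentially laid out in the discussion preceding the lemma; the task is just to package it cleanly as an ``if and only if'' argument. No deep tool is needed beyond the definition of $C^*$-convexity and the homogeneity of the norm.

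For the ``if'' direction, I would assume that $\|\cdot\|$ is an $M$-norm, i.e., that \eqref{c1} holds. Given $A_1,\ldots,A_k\in\mathscr{B}_1$ and $C_1,\ldots,C_k\in\mathbb{B}(\mathcal{H})$ with $\sum_i C_i^*C_i=I$, I apply \eqref{c1} with $X_i=A_i$ to get $\|\sum_i C_i^*A_iC_i\|\leq \max_i\|A_i\|\leq 1$, so $\sum_i C_i^*A_iC_i\in\mathscr{B}_1$, which is exactly $C^*$-convexity of $\mathscr{B}_1$.

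For the ``only if'' direction, I would assume $\mathscr{B}_1$ is $C^*$-convex and derive \eqref{c1}. Given arbitrary $X_1,\ldots,X_k\in\mathbb{B}(\mathcal{H})$ and $C_i$ as above, set $M=\max_{1\leq i\leq k}\|X_i\|$. If $M=0$ then every $X_i=0$ and \eqref{c1} is trivially true. Otherwise the key step is a scaling trick: each $X_i/M$ lies in $\mathscr{B}_1$, so $C^*$-convexity yields $\sum_i C_i^*(X_i/M)C_i\in\mathscr{B}_1$, which rearranges to $\|\sum_i C_i^*X_iC_i\|\leq M=\max_i\|X_i\|$, giving \eqref{c1}.

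There is really no obstacle; the only subtlety worth flagging is that the definition of $C^*$-convexity is stated for elements of norm at most $1$, while \eqref{c1} is a homogeneous inequality for \emph{all} operators — the normalization $X_i\mapsto X_i/M$ is precisely what bridges the two formulations. Since the entire argument is contained in the lines already written before the lemma, I would keep the written proof to at most a few sentences pointing back to that discussion.
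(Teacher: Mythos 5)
Your proposal is correct and follows essentially the same argument as the paper's own discussion preceding the lemma: the scaling $X_i\mapsto X_i/\max_j\|X_j\|$ for one direction and a direct application of \eqref{c1} for the other. Your explicit treatment of the degenerate case $\max_i\|X_i\|=0$ is a small tidiness the paper omits, but it does not change the substance.
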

%%=======================================================================
\begin{example}\label{spn}
  The operator norm (spectral norm) $\|\cdot\|_{\infty}$  is  a $M$-norm.
 It is well-known that (see for example \cite[Proposition 1.3.1]{Bhatia})
  \begin{align}\label{bh1}
  \|A\|_{\infty}\leq 1 \ \Longleftrightarrow \ \left[
\begin{array}{cc}
 I &  A\\
   A^* & I
\end{array}\right]\geq0.
\end{align}
Let $C_i\in\mathbb{B}(\mathcal{H})$ with $\sum_{i=1}^{k}C_i^*C_i=I$. If $\|A_i\|_{\infty}\leq 1$\ $(i=1,\ldots,k)$, then
\begin{align*}
 \ \left[
\begin{array}{cc}
 I &  \sum_{i=1}^{k}C_i^*A_iC_i\\
   \sum_{i=1}^{k}C_i^*A_i^*C_i & I
\end{array}\right]=\sum_{i=1}^{k}{\rm diag}(C_i,C_i)^*\left[
\begin{array}{cc}
 I &  A_i\\
  A_i^* & I
\end{array}\right]{\rm diag}(C_i,C_i)\geq 0.
\end{align*}
Hence, $\left\|\sum_{i=1}^{k}C_i^*A_iC_i\right\|_{\infty}\leq 1$ and so, the unit ball of $\|\cdot\|_{\infty}$ is $C^*$-convex. Lemma \ref{lem1} now implies that $\|\cdot\|_{\infty}$ is a $M$-norm.
 \end{example}
%%%======================================================================================

The dual space of $\mathbb{M}_n$ is identified with $\mathbb{M}_n$ itself under the duality coupling
$$\langle Y|X\rangle={\rm Tr} \left(Y^*X\right),\quad X,Y\in\mathbb{M}_n.$$
Here, it should be noticed that
\begin{align}\label{tr1}
\langle Y|Z^*XZ\rangle=\langle ZYZ^*|X\rangle\  \ \   \ ( X,Y,Z\in\mathbb{M}_n).
\end{align}
The dual norm $\|\cdot\|_*$ of a norm $\|\cdot\|$ is defined as
\begin{align}\label{dn}
  \|Y\|_*=\sup\left\{|\langle Y|X\rangle|;\quad \|X\|\leq 1\right\}, \ \ \ \mbox{for all}\  \ Y\in\mathbb{M}_n.
\end{align}
The next theorem provides a condition under which the unit ball of the dual norm $\|\cdot\|_*$  of a norm $\|\cdot\|$ on $\mathbb{M}_n$ is $C^*$-convex.
%%%%========================================================================
\begin{theorem}
  Let $\|\cdot\|_*$  be the dual norm of a norm $\|\cdot\|$ on $\mathbb{M}_n$. The unit   ball of  $\|\cdot\|_*$ is $C^*$-convex if and only if $\|\cdot\|$ is a $L$-norm.
 \end{theorem}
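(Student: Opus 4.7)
The plan is to use Lemma \ref{lem1} to reformulate the statement: the unit ball of $\|\cdot\|_*$ is $C^*$-convex if and only if $\|\cdot\|_*$ is an $M$-norm. So the real task is to prove that $\|\cdot\|_*$ is an $M$-norm if and only if $\|\cdot\|$ is an $L$-norm. The bridge between the two conditions is the trace duality \eqref{tr1}, which lets one move $C_i$'s from one side of the pairing to the other. Both implications will follow by dualizing and applying the opposite-type inequality to a cleverly chosen test element.

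\medskip

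\noindent\textbf{($\Leftarrow$) Assume $\|\cdot\|$ is an $L$-norm.} Given $Y_1,\dots,Y_k\in\mathbb{M}_n$ and $C_i$'s with $\sum C_i^*C_i=I$, set $M=\max_i\|Y_i\|_*$. For any $X$ with $\|X\|\leq 1$, I would compute
\begin{align*}
\left|\left\la \sum_{i=1}^k C_i^*Y_iC_i \,\Big|\, X \right\ra\right|
&\leq \sum_{i=1}^k|\la Y_i\,|\,C_iXC_i^*\ra|
\leq \sum_{i=1}^k \|Y_i\|_*\,\|C_iXC_i^*\|\\
&\leq M\sum_{i=1}^k\|C_iXC_i^*\|\leq M\|X\|\leq M,
\end{align*}
where the first equality uses \eqref{tr1} applied to each term, and the last step uses the $L$-norm hypothesis. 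Taking the supremum over $\|X\|\leq 1$ yields $\|\sum C_i^*Y_iC_i\|_*\leq M$, so $\|\cdot\|_*$ is of $M$-type.

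\medskip

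\noindent\textbf{($\Rightarrow$) Assume $\|\cdot\|_*$ is an $M$-norm.} Fix $X\in\mathbb{M}_n$ and $\sum C_i^*C_i=I$. For each $i$, by the duality between $\|\cdot\|$ and $\|\cdot\|_*$ (and Hahn--Banach, which is valid since $\mathbb{M}_n$ is finite-dimensional and reflexive) I can pick $Y_i$ with $\|Y_i\|_*\leq 1$ and $\la Y_i\,|\,C_iXC_i^*\ra=\|C_iXC_i^*\|$. Then, using \eqref{tr1} in the reverse direction,
\begin{align*}
\sum_{i=1}^k\|C_iXC_i^*\|
=\sum_{i=1}^k \la Y_i\,|\,C_iXC_i^*\ra
=\left\la \sum_{i=1}^k C_i^*Y_iC_i \,\Big|\, X\right\ra
\leq \left\|\sum_{i=1}^k C_i^*Y_iC_i\right\|_*\|X\|,
\end{align*}
and the $M$-norm hypothesis on $\|\cdot\|_*$ bounds the last norm by $\max_i\|Y_i\|_*\leq 1$. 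Hence $\sum\|C_iXC_i^*\|\leq\|X\|$, so $\|\cdot\|$ is of $L$-type.

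\medskip

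There is no real technical obstacle: the whole proof is a two-line symmetric duality argument around identity \eqref{tr1}. The only point demanding a bit of care is the Hahn--Banach step in the reverse direction, namely the existence of a norming functional $Y_i$ that achieves the real value $\|C_iXC_i^*\|$ (not just $|\la Y_i\,|\,C_iXC_i^*\ra|$); this is standard in finite dimensions. The proof extends verbatim to $\mathbb{B}(\mathcal{H})$ provided one works with the trace-class duality $\mathbb{L}^1(\mathcal{H})^{*}\cong\mathbb{B}(\mathcal{H})$ indicated in the introduction.
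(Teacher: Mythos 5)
Your proposal is correct and is essentially the paper's own argument: both directions rest on the identity \eqref{tr1} together with the choice of norming functionals $Y_i$ for the elements $C_iXC_i^*$ (forward direction) and the duality bound $|\la Y\,|\,X\ra|\leq\|Y\|_*\,\|X\|$ applied to arbitrary dual-unit-ball elements (converse). Routing the statement through Lemma \ref{lem1} so as to phrase it as ``$\|\cdot\|_*$ is an $M$-norm iff $\|\cdot\|$ is an $L$-norm'' is only a cosmetic repackaging of the same computation the paper performs directly on the unit ball $\mathscr{B}^*_1$.
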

\begin{proof}
  The unit ball of  $\|\cdot\|_*$ is the set
 $$\mathscr{B}^*_1=\left\{Y\in\mathbb{M}_n;\ \ |{\rm Tr} \left(Y^*X\right)|\leq \|X\|, \ \ \mbox{for all}\  X\in\mathbb{M}_n\right\} .$$
 Assume that $\mathscr{B}^*_1$ is $C^*$-convex, $X\in\mathbb{M}_n$ and $\sum_{i=1}^{k}C_i^*C_i=I$. For every $i=1,\ldots,k$, the definition \eqref{dn} guarantees the existence of $Y_i\in\mathbb{M}_n$ such that $\|Y_i\|_*=1$ and $ {\rm Tr}\left(Y_i^*C_iXC_i^*\right)=\|C_iXC_i^*\|$. It follows from the $C^*$-convexity of $\mathscr{B}^*_1$ that $\sum_{i=1}^{k}C_i^*Y_iC_i\in\mathscr{B}^*_1$. Therefore,
 \begin{align*}
\sum_{i=1}^{k}\|C_iXC_i^*\|&=\left|\sum_{i=1}^{k}{\rm Tr}\left(Y_i^*C_iXC_i^*\right)\right|\\
&=\left|\sum_{i=1}^{k}{\rm Tr}\left(C_i^*Y_i^*C_iX\right)\right|\qquad (\mbox{by \eqref{tr1}})\\
&=\left|{\rm Tr}\left(\left(\sum_{i=1}^{k}C_i^*Y_iC_i\right)^*X\right)\right|\\
&\leq \|X\|,
 \end{align*}
 i.e., the condition \eqref{cd} is valid and so $\|\cdot\|$ is a  $L$-norm.

 For the converse, assume that \eqref{cd} holds. Assume that $Y_1,\ldots,Y_k\in\mathscr{B}^*_1$ and $C_1,\ldots,C_k\in\mathbb{M}_n$ with $\sum_{i=1}^{k}C_i^*C_i=I$. It follows
 \begin{align}\label{d1}
\left|{\rm Tr}\left(Y_i^*Z\right)\right|\leq \|Z\|\qquad  (  Z\in\mathbb{M}_n,\ \ i=1,\ldots,k).
 \end{align}
  Hence,
 If $X\in\mathbb{M}_n$, then
 \begin{align*}
   \left|{\rm Tr}\left(\left(\sum_{i=1}^{k}C_i^*Y_iC_i\right)^*X\right)\right|&=
   \left|\sum_{i=1}^{k} {\rm Tr}\left(Y_i^*\left(C_iXC_i^*\right)\right)\right|\qquad (\mbox{by \eqref{tr1}})\\
   &\leq \sum_{i=1}^{k}\left| {\rm Tr}\left(Y_i^*\left(C_iXC_i^*\right)\right)\right|\\
   &\leq \sum_{i=1}^{k}\left\|C_iXC_i^*\right\|\qquad (\mbox{by \eqref{d1}})\\
   & \leq \|X\|\qquad \qquad (\mbox{by \eqref{cd}}).
 \end{align*}
 Therefore, $\sum_{i=1}^{k}C_i^*Y_iC_i\in\mathscr{B}^*_1$ and so $\mathscr{B}^*_1$ is $C^*$-convex.
 \end{proof}
%%%=======================================================================================
The connection between $M$-norms and $L$-norms is stated below.
\begin{lemma}\label{lem2}
 A norm  $\|\cdot\|$ on $\mathbb{M}_n$ is a $M$-norm (resp. $L$-norm) if and only if its dual norm $\|\cdot\|_*$ is a $L$-norm (resp. $M$-norm).
\end{lemma}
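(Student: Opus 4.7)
The plan is to reduce everything to the theorem immediately preceding the lemma, combined with Lemma \ref{lem1}, using the fact that in finite dimensions the double dual of a norm is the norm itself.

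For the first equivalence (``$\|\cdot\|$ is an $L$-norm iff $\|\cdot\|_*$ is an $M$-norm''), I would simply chain two results already on the page. By the theorem just proved, $\|\cdot\|$ satisfies \eqref{cd} exactly when the unit ball of $\|\cdot\|_*$ is $C^*$-convex; by Lemma \ref{lem1} applied to $\|\cdot\|_*$, the latter is equivalent to $\|\cdot\|_*$ satisfying \eqref{c1}, i.e.\ being an $M$-norm.

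For the second equivalence (``$\|\cdot\|$ is an $M$-norm iff $\|\cdot\|_*$ is an $L$-norm''), I would apply the first equivalence to the norm $\|\cdot\|_*$ in place of $\|\cdot\|$: it gives that $\|\cdot\|_*$ is an $L$-norm iff $(\|\cdot\|_*)_*$ is an $M$-norm. Since $\mathbb{M}_n$ is finite-dimensional, the canonical trace pairing identifies $(\mathbb{M}_n)^{**}$ with $\mathbb{M}_n$ isometrically, and hence $(\|\cdot\|_*)_* = \|\cdot\|$. This identifies the right-hand side with the assertion that $\|\cdot\|$ itself is an $M$-norm.

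I do not anticipate any real obstacle: the only thing to be cautious about is the biduality step, where one must invoke the standard fact that for finite-dimensional normed spaces the natural map into the bidual is an isometric isomorphism, so $(\|\cdot\|_*)_*=\|\cdot\|$. Everything else is a direct transcription of the theorem and Lemma \ref{lem1}. Consequently, the entire proof can be written in essentially two lines, with no additional computation with the trace pairing \eqref{tr1} needed beyond what is already embedded in the preceding theorem.
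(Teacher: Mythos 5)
Your argument is correct, and it takes a genuinely different route from the paper. You reduce the whole lemma to results already on the page: the preceding theorem ($\|\cdot\|$ is an $L$-norm iff the unit ball of $\|\cdot\|_*$ is $C^*$-convex), Lemma \ref{lem1} applied to $\|\cdot\|_*$, and the finite-dimensional biduality $(\|\cdot\|_*)_*=\|\cdot\|$ under the trace pairing \eqref{tr1} (note the pairing is conjugate-symmetric, $\langle X|Y\rangle=\overline{\langle Y|X\rangle}$, so the identification is harmless). The paper instead proves both implications by a direct computation: for ``$M$-norm $\Rightarrow$ dual is $L$-norm'' it picks, for each $i$, a norm-attaining $A_i$ with $\|A_i\|=1$ and $\|C_iBC_i^*\|_*=\langle B\mid C_i^*A_iC_i\rangle$, sums, and uses \eqref{c1}; for the converse it picks $B$ with $\|B\|_*=1$ attaining $\left\|\sum_i C_i^*A_iC_i\right\|$ and uses \eqref{cd} for $\|\cdot\|_*$. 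Your version is shorter and avoids repeating the trace-pairing manipulations, at the cost of making the biduality step the explicit load-bearing ingredient; the paper's version is self-contained and symmetric with the proof of the preceding theorem, though its converse half quietly uses the same biduality fact (the existence of $B$ in the dual unit ball attaining the original norm), so you have not actually introduced any hypothesis the paper does not already rely on.
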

\begin{proof}
  Suppose that  $\|\cdot\|$ is a $M$-norm. Take $B\in\mathbb{M}_n$. Let $C_i\in\mathbb{M}_n$\ $(i=1,\ldots,k)$ with $\sum_{i=1}^{k}C_i^*C_i=I$. For every $i=1,\ldots,k$, it follows from \eqref{dn}  that there exists $A_i\in\mathbb{M}_n$ with $\|A_i\|=1$ and
  $$\|C_iBC_i^*\|_*=\langle C_iBC_i^*\mid A_i\rangle=\langle B \mid C_i^*A_iC_i\rangle.$$
  Since $\|\cdot\|$ is of $M$-type, $\left\|\sum_{i=1}^{k}C_i^*A_iC_i\right\|\leq \max_{i}\|A_i\|=1$,
  so that
  \begin{align*}
\sum_{i=1}^{k}\|C_iBC_i^*\|_*=\sum_{i=1}^{k}\left\langle B\mid C_i^*A_iC_i\right\rangle&=\sum_{i=1}^{k}{\rm Tr}\left(B^*\left(C_i^*A_iC_i\right)\right)\\
  &={\rm Tr}\left(B^*\left(\sum_{i=1}^{k}C_i^*A_iC_i\right)\right)\\
  &=\left\langle B\left|\ \sum_{i=1}^{k}C_i^*A_iC_i\right.\right\rangle\leq \|B\|_*.
  \end{align*}
  Therefore, $\|\cdot\|_*$ is a $L$-norm.

Suppose conversely that the dual norm $\|\cdot\|_*$ is a $L$-norm. If $A_i\in\mathbb{M}_n$\ $(i=1,\ldots,k)$ and $\sum_{i=1}^{k}C_i^*C_i=I$, then there exits $B\in\mathbb{M}_n$ such that
$$\| B\|_*=1 \ \ \mbox{and} \ \ \left\langle B\left|\ \sum_{i=1}^{k}C_i^*A_iC_i\right.\right\rangle=\left\|\sum_{i=1}^{k}C_i^*A_iC_i\right\|.$$
 We have
 \begin{align*}
  \left\|\sum_{i=1}^{k}C_i^*A_iC_i\right\|& =\left\langle B\left|\ \sum_{i=1}^{k}C_i^*A_iC_i\right.\right\rangle \\ &=\sum_{i=1}^{k}\langle C_iBC_i^* |  A_i\rangle\qquad (\mbox{by \eqref{tr1}})\\
  &\leq \sum_{i=1}^{k}\|A_i\|\cdot \| C_iBC_i^*\|_*\\
  &\leq \max_{1\leq j\leq k}\|A_j\|\sum_{i=1}^{k}\| C_iBC_i^*\|_*\\
  & \leq \max_{1\leq j\leq k}\|A_j\|\cdot\|B\|_*\\
  &=\max_{1\leq j\leq k}\|A_j\|,
 \end{align*}
 where the last inequality follows from the fact that $\|\cdot\|_*$ is a  $L$-norm. This completes the proof.
\end{proof}

We  can   summarize the above discussion   as: \\
If $\|\cdot\|_*$ is the dual norm of a norm $\|\cdot\|$ on $\mathbb{M}_n$ and $\mathscr{B}_1$ and $\mathscr{B}_1^*$ are the unit ball of $\|\cdot\|$ and $\|\cdot\|_*$, respectively, then
$$\mathscr{B}_1 \ \mbox{is $C^*$-convex}\ \Longleftrightarrow\  \|\cdot\|\ \mbox{ is a $M$-norm}\ \Longleftrightarrow\ \|\cdot\|_* \ \mbox{is a $L$-norm}$$
and
$$\mathscr{B}_1^* \ \mbox{is $C^*$-convex}\ \Longleftrightarrow\  \|\cdot\|\ \mbox{ is a $L$-norm}\ \Longleftrightarrow\ \|\cdot\|_* \ \mbox{is a $M$-norm}.$$
%%%===================================================================================
\begin{example}
  The trace norm $\|\cdot\|_1$ defined on $\mathbb{M}_n$ by $\|A\|_1={\rm Tr}(|A|)$ is a $L$-norm. Since
  $\|\cdot\|_1$  is the dual norm of the operator norm,  Example \ref{spn} and Lemma \ref{lem2} imply that $\|\cdot\|_1$ is a $L$-norm.
\end{example}
%%%======================================================================================
%%%%%%%%%%%%%%%%%%%%%%%%%%%%%%%%%%%%%%%%%%%%%%%%%%%%%%%%%%%%%%%%%%%%%%%%%%%%%%%%%%%%%%%%%%%
%%%=========================================================================================

 Let $\|\cdot\|$  be a norm on $\mathbb{M}_n$. There are $M$-norms $\||\cdot\||_{(M1)}$  and $\||\cdot\||_{(M2)}$  and $L$-norms $\||\cdot\||_{(L1)}$  and $\||\cdot\||_{(L2)}$  such that $\||\cdot\||_{(M1)}\leq \|\cdot\|\leq \||\cdot\||_{(M2)}$ and  $\||\cdot\||_{(L1)}\leq \|\cdot\|\leq \||\cdot\||_{(L2)}$.  Since all norms on $\mathbb{M}_n$ are equivalent, there are $\alpha,\beta,\mu,\nu>0$ such that
 $\alpha\|\cdot\|_{\infty}\leq\|\cdot\|\leq \beta\|\cdot\|_{\infty}$ and $\mu\|\cdot\|_{1}\leq\|\cdot\|\leq \nu\|\cdot\|_{1}$.   It is enough to define $\||\cdot\||_{(M1)}:=\alpha\|\cdot\|_{\infty}$, $\||\cdot\||_{(M2)}:=\beta\|\cdot\|_{\infty}$,   $\||\cdot\||_{(L1)}:=\mu\|\cdot\|_{1}$ and $\||\cdot\||_{(L2)}:=\nu\|\cdot\|_{1}$.

%%%===============================================================

Recall that a norm $\|\cdot\|$ on $\mathbb{B}(\mathcal{H})$ is said to be unitarily invariant if
$\|UAV\|=\|A\|$ for all $A\in\mathbb{B}(\mathcal{H})$ and all unitaries  $U,V\in\mathbb{B}(\mathcal{H})$,
while  it is called weakly unitarily invariant if
$\|U^*AU\|=\|A\|$ for every  $ A\in\mathbb{B}(\mathcal{H})$ and every unitary $ U\in\mathbb{B}(\mathcal{H})$. It is easy to see that all $M$-norms and $L$-norms on $\mathbb{B}(\mathcal{H})$ are  weakly unitarily invariant.
Moreover, let $E$ be a projection in $\mathbb{B}(\mathcal{H})$. If $\|\cdot\|$ is a $M$-norm, then  \eqref{c1} gives
 $$\|EAE\|=\|C_1^*A_1C_1+C_2^*A_2C_2\|\leq \max\left\{\|A\|,\|0\|\right\}=\|A\|$$
and if $\|\cdot\|$ is a $L$-norm, then \eqref{cd} implies
 $$\|EAE\|\leq \|C_1AC_1^*\|+\|C_2AC_2^*\|\leq \|A\|.$$
 We will use this fact:
 \begin{lemma}\label{lem5}
   If  $\|\cdot\|$  is a $L$-norm or is a $M$-norm, then $ \|EAE\|\leq \|A\|$ for  every projection $E$ and every $A\in\mathbb{B}(\mathcal{H})$.
\end{lemma}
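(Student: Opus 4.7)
The plan is to simply formalize the one-line derivation already sketched in the paragraph preceding the lemma: pick the two-term decomposition of the identity given by the projection $E$ and its complement, and feed it into the defining inequality of the relevant norm type.

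Concretely, I would set $C_1 = E$ and $C_2 = I - E$. Since $E$ is a (self-adjoint) projection, $E^* = E = E^2$ and likewise for $I-E$, so $C_1^*C_1 + C_2^*C_2 = E + (I-E) = I$. This is the only algebraic identity I need.

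For the $M$-norm case, I would take $A_1 = A$ and $A_2 = 0$. Then $C_1^*A_1C_1 + C_2^*A_2C_2 = EAE$, and the defining inequality \eqref{c1} yields
\begin{equation*}
\|EAE\| = \left\| C_1^*A_1C_1 + C_2^*A_2C_2 \right\| \leq \max\{\|A\|, \|0\|\} = \|A\|.
\end{equation*}
For the $L$-norm case, I would apply \eqref{cd} directly to $X = A$ with the same $C_1, C_2$, noting that $C_i X C_i^* = C_i A C_i$ because the $C_i$ are self-adjoint; this gives
\begin{equation*}
\|EAE\| \leq \|EAE\| + \|(I-E)A(I-E)\| \leq \|A\|.
\end{equation*}

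There is no real obstacle here, since the defining inequalities of $M$- and $L$-norms are already built to handle a $C^*$-convex combination, and an orthogonal projection supplies such a combination for free. The only point that needs to be flagged is the self-adjointness and idempotence of $E$, which are used twice: once to check $\sum C_i^*C_i = I$, and once (in the $L$-norm case) to identify $C_i A C_i^*$ with $E A E$ and $(I-E)A(I-E)$.
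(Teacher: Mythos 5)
Your proof is correct and is essentially identical to the paper's own argument, which also takes $C_1=E$, $C_2=I-E$ (with $A_1=A$, $A_2=0$ in the $M$-norm case) and feeds this decomposition into \eqref{c1} and \eqref{cd}, respectively. The only addition you make is to spell out the self-adjointness and idempotence of $E$, which the paper leaves implicit.
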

%%%%=======================================================================================
\begin{proposition}\label{th22}
Let  $\|\cdot\|$ be a norm on $\mathbb{M}_n$.
\begin{enumerate}
  \item If $\|\cdot\|$ is a $M$-norm, then there exists $\alpha>0$ such that $\|A\|=\alpha\|A\|_{\infty}$ for every normal element $A\in\mathbb{M}_n$.\\
      \item If $\|\cdot\|$ is a $L$-norm, then there exists $\alpha>0$ such that $\|A\|=\alpha\|A\|_{1}$ for every normal element $A\in\mathbb{M}_n$.
\end{enumerate}
\end{proposition}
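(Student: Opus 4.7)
The plan is to exploit the spectral decomposition of a normal matrix. Given a normal $A \in \mathbb{M}_n$, I would fix an orthonormal eigenbasis $e_1, \ldots, e_n$ with associated eigenvalues $\lambda_1, \ldots, \lambda_n$, and let $P_i = e_i e_i^*$ be the corresponding rank-one spectral projections, so that $A = \sum_{i=1}^n \lambda_i P_i$ and $\sum_{i=1}^n P_i = I$. Since both $M$-norms and $L$-norms are weakly unitarily invariant (as recorded just before Lemma~\ref{lem5}) and any two rank-one projections are unitarily equivalent, the quantity $\alpha := \|P_i\|$ is independent of $i$ and strictly positive; this $\alpha$ will serve as the constant claimed in both parts.

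For part (1), the key observation is that $\{P_i\}$ is admissible in the $M$-norm inequality \eqref{c1} because $\sum_i P_i^* P_i = \sum_i P_i = I$. I would then take $X_i := \lambda_i P_i$, so that $\sum_i P_i^* X_i P_i = A$, and apply \eqref{c1} to conclude $\|A\| \leq \max_i \|\lambda_i P_i\| = \alpha\|A\|_\infty$. For the reverse inequality I would pick $j$ with $|\lambda_j| = \|A\|_\infty$ and invoke Lemma~\ref{lem5} applied to the projection $P_j$, giving $\|A\| \ge \|P_j A P_j\| = |\lambda_j|\,\|P_j\| = \alpha\|A\|_\infty$.

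For part (2), I would feed the same family $\{P_i\}$ (with $X = A$) into the $L$-norm inequality \eqref{cd}. Since $P_i A P_i = \lambda_i P_i$, this produces
\[
\alpha \|A\|_1 \;=\; \sum_i |\lambda_i|\,\|P_i\| \;=\; \sum_i \|P_i A P_i\| \;\leq\; \|A\|.
\]
The reverse estimate is immediate from the ordinary triangle inequality together with $\|P_i\| = \alpha$, namely $\|A\| \leq \sum_i |\lambda_i|\,\|P_i\| = \alpha\|A\|_1$.

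I do not expect any genuine obstacle. The structural point, which makes both parts fall out cleanly, is that the spectral projections of a normal matrix themselves form a decomposition of $I$ of precisely the type appearing in \eqref{c1} and \eqref{cd}; this lets each defining inequality pin down $\|A\|$ on normal elements as a scalar multiple of either $\|A\|_\infty$ (in the $M$-case) or $\|A\|_1$ (in the $L$-case), with the scalar being the common norm of a rank-one projection.
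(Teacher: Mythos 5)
Your proposal is correct and follows essentially the same route as the paper: both use the spectral decomposition $A=\sum_i\lambda_iE_i$ into rank-one orthoprojections, weak unitary invariance to get the common value $\alpha=\|E_i\|$, the defining inequalities \eqref{c1} and \eqref{cd} applied with the $E_i$ themselves as the $C_i$, and Lemma~\ref{lem5} (respectively the triangle inequality) for the reverse bounds. No gaps.
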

\begin{proof}
Note that if $\|\cdot\|$ is a $M$-norm or is a $L$-norm, then it is weakly unitarily invariant. Moreover, every two rank-one orthoprojections $P$ and $Q$  in $\mathbb{M}_n$ are unitarily equivalent, i.e., there exists a unitary $U$ such that $P=U^*QU$. It follows that $\|P\|=\|U^*QU\|=\|Q\|$.

Now, if $A\in\mathbb{M}_n$ is normal, then by the spectral theorem, there are orthoprojections $E_i$ and  $\lambda_i\in\mathbb{C}$ \ $(i=1,\ldots,n)$ such that
\begin{align}\label{sd}
  A=\sum_{i=1}^{n}\lambda_iE_i;\qquad E_jE_k=\delta_{j,k}E_k,\qquad \sum_{i=1}^{n}E_i=I.
\end{align}
Noticing that $\|E_i\|=\|E_j\|$ for every $i,j\in\{1,\ldots,n\}$, we set $\alpha:=\|E_i\|$. \\
{\rm (1)}\ If $\|\cdot\|$ is a $M$-norm, then
$$\alpha\|A\|_{\infty}=\alpha\cdot\max_{1\leq i\leq n}|\lambda_i|=\max_{1\leq i\leq n}\|\lambda_i E_i\|= \max_{1\leq i\leq n}\|E_iAE_i\|\leq\|A\|,$$
where the last inequality follows from Lemma \ref{lem5}. In addition, Since $\|\cdot\|$ is a $M$-norm, we have
$$\|A\|=\left\|\sum_{i=1}^{n}\lambda_iE_i\right\|=\left\|\sum_{i=1}^{n}E_i(\lambda_iE_i)E_i\right\|\leq\max_{1\leq i\leq n}\|\lambda_iE_i\|=\alpha\|A\|_{\infty}.$$
This completes the proof of (1). \\
{\rm (2)}\  If $\|\cdot\|$ is a $L$-norm, then
$$\|A\|=\left\|\sum_{i=1}^{n}\lambda_iE_i\right\|\leq\sum_{i=1}^{n}\|\lambda_iE_i\|= \alpha\sum_{i=1}^{n}|\lambda_i|=\alpha\|A\|_1.$$
Moreover,
$$\alpha\|A\|_1=\sum_{i=1}^{n}\|\lambda_iE_i\|=\sum_{i=1}^{n}\|E_iAE_i\|\leq \|A\|,$$
since $\|\cdot\|$ is a $L$-norm.
\end{proof}
%%%%=======================================================================================
\begin{corollary}
 Let  $\|\cdot\|$ be a unitarily  invariant  norm on $\mathbb{M}_n$.
  \begin{enumerate}
  \item  If $\|\cdot\|$ is a $M$-norm, then  $\|\cdot\|=\alpha\|\cdot\|_{\infty}$ for some $\alpha>0$. \\
      \item If $\|\cdot\|$ is a $L$-norm, then $\|\cdot\|=\alpha\|\cdot\|_{1}$ for  some $\alpha>0$.
  \end{enumerate}
\end{corollary}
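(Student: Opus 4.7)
The plan is to reduce the general case to the normal case already handled by Proposition \ref{th22}, using the singular value decomposition together with the assumed (full) unitary invariance. Given $A\in\mathbb{M}_n$, I would write $A=U\Sigma V^*$ where $U,V$ are unitary and $\Sigma=\mathrm{diag}(\sigma_1,\ldots,\sigma_n)$ is the diagonal matrix of singular values. Unitary invariance of $\|\cdot\|$ yields $\|A\|=\|\Sigma\|$, and crucially $\Sigma$ is diagonal hence normal, so Proposition \ref{th22} applies directly to $\Sigma$.

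For part (1), applying Proposition \ref{th22}(1) to the $M$-norm $\|\cdot\|$ produces some $\alpha>0$ (depending on $\|\cdot\|$, not on $A$) with $\|\Sigma\|=\alpha\|\Sigma\|_{\infty}$. Then I invoke the fact that $\|\cdot\|_{\infty}$ is itself unitarily invariant, so $\|\Sigma\|_{\infty}=\|A\|_{\infty}$ (equivalently, the largest singular value of $A$ equals $\|A\|_{\infty}$). Chaining these gives $\|A\|=\alpha\|A\|_{\infty}$ for every $A\in\mathbb{M}_n$.

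Part (2) follows by the same template: Proposition \ref{th22}(2) provides $\alpha>0$ with $\|\Sigma\|=\alpha\|\Sigma\|_1$, and since $\|\cdot\|_1$ is unitarily invariant we have $\|\Sigma\|_1=\sum_i\sigma_i=\|A\|_1$, so $\|A\|=\alpha\|A\|_1$.

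There is no substantive obstacle: the content was already extracted in Proposition \ref{th22}, and the only additional ingredient is the standard reduction from arbitrary matrices to normal (in fact diagonal) ones via SVD, which is available precisely because we have upgraded the hypothesis from weak unitary invariance to two-sided unitary invariance. The same constant $\alpha$ that works on normal matrices automatically works on all of $\mathbb{M}_n$, since both reference norms $\|\cdot\|_{\infty}$ and $\|\cdot\|_1$ are themselves unitarily invariant and therefore commute with the SVD reduction.
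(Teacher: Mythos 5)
Your argument is correct and is essentially the paper's own: the paper reduces to the normal case by writing $\|A\|=\| \, |A| \, \|$ via the polar decomposition and then applies Proposition \ref{th22}, which is the same reduction you perform with the singular value decomposition $A=U\Sigma V^*$ (the two are interchangeable here), followed by unitary invariance of $\|\cdot\|_\infty$ and $\|\cdot\|_1$. No gaps.
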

\begin{proof}
  Indeed, if $\|\cdot\|$ is unitarily invariant, then for every $A\in\mathbb{M}_n$ we have $\|A\|=\|\ |A| \ \|$ by the polar decomposition. Proposition  \ref{th22} then can be applied.
\end{proof}
%%%%=======================================================================================
It follows from the proof of Proposition \ref{th22} that if  a norm $\|\cdot\|$ on $\mathbb{M}_n$ is weakly unitarily invariant, then $\|E_1\|=\|E_2\|$ for every two rank-one orthoprojections $E_1$ and $E_2$. If $\|\cdot\|$ is a weakly unitarily invariant norm, then we set $\chi_{\|\cdot\|}:=\|E\|$, where $E$ is any rank-one orthoprojection. Let $\omega(\cdot)$ and $\omega_*(\cdot)$ denote the numerical radius (norm) and its dual norm, respectively.

%%======================================================================================
\begin{proposition}
  Let $\|\cdot\|$  be a weakly unitarily invariant norm with $\chi_{\|\cdot\|}=1$.
\begin{enumerate}
  \item If $\|\cdot\|$  is a $M$-norm, then $\omega(\cdot)\leq \|\cdot\| \leq\|\cdot\|_{\infty}$.
  \item If $\|\cdot\|$  is a $L$-norm, then $\|\cdot\|_1\leq \|\cdot\|\leq\omega_*(\cdot)$.
\end{enumerate}
\end{proposition}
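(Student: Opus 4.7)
My plan is to prove the two parts by different routes: (1) directly from $C^*$-convexity and standard matrix decompositions, and (2) by duality, using Lemma \ref{lem2} together with part (1). For the lower bound $\omega(A) \leq \|A\|$ in part (1), I would fix a unit vector $x$ and set $E = xx^*$. Since $E$ is a rank-one orthoprojection and $\chi_{\|\cdot\|} = 1$, the identity $EAE = \langle Ax, x\rangle E$ yields $\|EAE\| = |\langle Ax, x\rangle|$; Lemma \ref{lem5} then gives $|\langle Ax, x\rangle| \leq \|A\|$, and taking the supremum over unit $x$ produces $\omega(A) \leq \|A\|$. This argument uses only Lemma \ref{lem5}, so it works verbatim for $L$-norms as well---a fact I will reuse in part (2).

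The main obstacle is the upper bound $\|A\| \leq \|A\|_\infty$ in part (1); the key idea is to write any contraction as an average of two unitaries. Assuming $\|A\|_\infty = 1$, I would use the polar decomposition $A = V|A|$ (extending the partial isometry $V$ to a unitary, which is possible in finite dimension), apply the spectral theorem $|A| = \sum_i \sigma_i F_i$ with $\sigma_i \in [0,1]$, write $\sigma_i = \cos\theta_i$, and set $W = \sum_i e^{i\theta_i} F_i$. Then $W$ is unitary and $|A| = \tfrac{1}{2}(W + W^*)$, so $A = \tfrac{1}{2}(VW + VW^*)$ is the average of the two unitaries $U_1 = VW$ and $U_2 = VW^*$. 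Since unitaries are normal, Proposition \ref{th22}(1) together with $\chi_{\|\cdot\|} = 1$ gives $\|U_j\| = \|U_j\|_\infty = 1$, and the triangle inequality closes the case; a rescaling handles general $A$.

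For part (2), I appeal to duality. By Lemma \ref{lem2}, $\|\cdot\|_*$ is an $M$-norm, and a short check (moving unitaries across the trace pairing) shows it inherits weak unitary invariance from $\|\cdot\|$. I must first verify $\chi_{\|\cdot\|_*} = 1$. Let $E = xx^*$ be a rank-one orthoprojection, so $\|E\| = 1$ by Proposition \ref{th22}(2). Taking $Y = E$ in \eqref{dn} yields $\|E\|_* \geq |{\rm Tr}(E^2)| = 1$; for the reverse, the $\omega(Y) \leq \|Y\|$ argument from the first paragraph (valid here since Lemma \ref{lem5} also covers $L$-norms) gives $|{\rm Tr}(EY)| = |\langle Yx, x\rangle| \leq \omega(Y) \leq \|Y\| \leq 1$ for every $Y$ with $\|Y\| \leq 1$, hence $\|E\|_* \leq 1$. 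With $\chi_{\|\cdot\|_*} = 1$ established, part (1) applied to the $M$-norm $\|\cdot\|_*$ yields $\omega \leq \|\cdot\|_* \leq \|\cdot\|_\infty$, and dualizing both inequalities (using $(\|\cdot\|_\infty)_* = \|\cdot\|_1$ together with the bidualities $\|\cdot\|_{**} = \|\cdot\|$ and $\omega_{**} = \omega$, and the fact that taking the dual norm reverses pointwise order) delivers exactly $\|\cdot\|_1 \leq \|\cdot\| \leq \omega_*(\cdot)$.
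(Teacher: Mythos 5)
Your proof is correct and follows essentially the same route as the paper: the lower bound via $E=xx^*$ and Lemma \ref{lem5}, the upper bound by writing a contraction as an average of two unitaries whose norm equals $1$ by Proposition \ref{th22}(1) with $\chi_{\|\cdot\|}=1$, and part (2) by dualizing part (1) through Lemma \ref{lem2}. The only difference is that you spell out details the paper leaves implicit --- the explicit two-unitary decomposition $A=\tfrac12(VW+VW^*)$ and the verification that $\chi_{\|\cdot\|_*}=1$ --- which strengthens rather than changes the argument.
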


\begin{proof}
  (1) \ First take $A$ with $\|A\|=1$. For any unit vector $\|x\|=1$, put $E:=xx^*$. Since $\|E\|=1$, and $EAE=\langle x,Ax\rangle E$, we have
 \begin{align*}
|\langle x,Ax\rangle|=\|EAE\|&=\|EAE+(I-E)0(I-E)\|\\
& \leq \|A\|=1 \qquad(\mbox{since $\|\cdot\|$ is a $M$-norm})
 \end{align*}
 so that $\omega(A)\leq 1=\|A\|$. \\
 Next, take $B$ with $\|B\|_{\infty}=1$. It is known that $B$ can be written as a convex combination of two unitaries, i.e.,
 $B=\lambda U+(1-\lambda)V$
 for some $\lambda\in[0,1]$ and two unitaries $U$ and $V$. We know that every unitary matrix has norm 1 for any $M$-norm $\|\cdot\|$  with $\chi_{\|\cdot\|}=1$. Therefore
 $$\|B\|\leq \lambda\|U\|+(1-\lambda)\|V\|=1=\|B\|_{\infty}.$$
 (2)\ If $\|\cdot\|$ is a $L$-norm, then $\|\cdot\|_*$ is a $M$-norm. Moreover, $\chi_{\|\cdot\|_*}=1$. Part (1) now implies that $\omega(\cdot)\leq \|\cdot\|_*\leq\|\cdot\|_{\infty}$. Finally, duality of norms reveals that $\|\cdot\|_1\leq \|\cdot\|\leq\omega_*(\cdot)$.
\end{proof}

%%=========================================================================================
\section{Minimum norms of $L$-type  and maximum norms of $M$-type }
In this section we are going to show that the class of $L$-norms  enjoys a minimum element.
\begin{theorem}\label{th11}
 The class of $L$-norms on $\mathbb{B}(\mathcal{H})$  grater  than a given norm
possesses a minimum element.
\end{theorem}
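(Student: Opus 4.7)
The plan is to construct the minimum explicitly by setting
\[
N(X) := \sup \left\{ \sum_{i=1}^{k} \|C_iXC_i^*\| : k\in\mathbb{N},\ \ C_1,\ldots,C_k\in\mathbb{B}(\mathcal{H}),\ \sum_{i=1}^{k}C_i^*C_i=I \right\}.
\]
Taking the trivial representation $k=1$, $C_1=I$ yields $N(X)\geq \|X\|$, so $N\geq \|\cdot\|$ for free. The four tasks that remain are (i) to show $N(X)<\infty$ so that $N$ is an honest norm, (ii) to verify the norm axioms, (iii) to verify that $N$ satisfies the $L$-norm inequality \eqref{cd}, and (iv) to verify the minimality.

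The main obstacle is finiteness of $N$. I would establish it by comparison with the trace norm. Using a polar decomposition $X=U|X|$ with $U$ unitary, I would factor
\[
C_iXC_i^* = \bigl(C_iU|X|^{1/2}\bigr)\bigl(|X|^{1/2}C_i^*\bigr),
\]
and apply $\|AB\|_1\leq \|A\|_2\|B\|_2$ followed by Cauchy--Schwarz to get
\[
\sum_{i=1}^{k}\|C_iXC_i^*\|_1 \leq \left(\sum_{i=1}^{k}\|C_iU|X|^{1/2}\|_2^2\right)^{1/2}\left(\sum_{i=1}^{k}\||X|^{1/2}C_i^*\|_2^2\right)^{1/2}.
\]
Each factor collapses, via $\sum_i C_i^*C_i=I$ and the cyclicity of the trace, to ${\rm Tr}(|X|)=\|X\|_1$, hence $\sum_i\|C_iXC_i^*\|_1\leq \|X\|_1$. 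Invoking equivalence of norms on $\mathbb{M}_n$ (the constant $\nu$ with $\|\cdot\|\leq \nu\|\cdot\|_1$ from the discussion following Lemma \ref{lem2}) gives $N(X)\leq \nu\|X\|_1<\infty$.

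Once finiteness is in hand, positivity and homogeneity are immediate, while for the triangle inequality I would observe that for any fixed family $\{C_i\}$ with $\sum C_i^*C_i=I$,
\[
\sum_{i=1}^{k}\|C_i(X+Y)C_i^*\| \leq \sum_{i=1}^{k}\|C_iXC_i^*\| + \sum_{i=1}^{k}\|C_iYC_i^*\| \leq N(X)+N(Y),
\]
and take the supremum. For the $L$-norm property, given $D_1,\ldots,D_m$ with $\sum D_j^*D_j=I$ and $\epsilon>0$, I would select for each $j$ a representation $\{C_{j,i}\}_{i=1}^{k_j}$ with $\sum_i\|C_{j,i}(D_jXD_j^*)C_{j,i}^*\|\geq N(D_jXD_j^*)-\epsilon/m$. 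The composite family $\{C_{j,i}D_j\}_{j,i}$ satisfies
\[
\sum_{j,i}(C_{j,i}D_j)^*(C_{j,i}D_j) = \sum_{j}D_j^*\Bigl(\sum_i C_{j,i}^*C_{j,i}\Bigr)D_j = \sum_j D_j^*D_j = I,
\]
so the definition of $N$ gives $\sum_{j,i}\|C_{j,i}D_jXD_j^*C_{j,i}^*\|\leq N(X)$, and hence $\sum_j N(D_jXD_j^*)\leq N(X)+\epsilon$; letting $\epsilon\to0$ confirms \eqref{cd} for $N$.

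Minimality is then the easy endpoint: if $M$ is any $L$-norm with $M\geq \|\cdot\|$, then for every representation $\sum C_i^*C_i=I$,
\[
\sum_{i=1}^{k}\|C_iXC_i^*\| \leq \sum_{i=1}^{k}M(C_iXC_i^*) \leq M(X),
\]
the second inequality being the $L$-norm property of $M$. Taking the supremum over all such representations yields $N\leq M$, so $N$ is the required minimum element.
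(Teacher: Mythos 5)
Your proposal is correct and follows essentially the same route as the paper: the same supremum construction, the same composite-family trick $\{C_{j,i}D_j\}$ (with an $\epsilon$-selection) to verify the $L$-norm inequality, and the same two-line minimality argument. The one genuine addition is your finiteness check via the trace-norm estimate $\sum_i\|C_iXC_i^*\|_1\leq\|X\|_1$ (itself a direct proof that $\|\cdot\|_1$ satisfies \eqref{cd}), a point the paper dismisses with ``it is clear''; this extra care is worthwhile, but note that your argument uses equivalence of norms on $\mathbb{M}_n$ and hence only settles finiteness in finite dimensions --- which is in fact the only setting where it can hold in general, since for $\|\cdot\|=\|\cdot\|_\infty$ on an infinite-dimensional $\mathcal{H}$ the supremum defining $N(X)$ is infinite off the trace class (take finitely many orthogonal projections summing to $I$), so the theorem as stated for $\mathbb{B}(\mathcal{H})$ should be read with that caveat, or with $N$ allowed to be extended-valued.
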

\begin{proof}Let $\|\cdot\|$ be a norm on $\mathbb{B}(\mathcal{H})$.
  Define a norm $\|\cdot\|_{(u)}$ by
 \begin{align}\label{u}
  \|A\|_{(u)}:=\sup\ \left\{\sum_{i=1}^{k}\|C_iAC_i^*\|;\ \sum_{i=1}^{k}C_i^*C_i=I\right\}.
 \end{align}
 It is clear that $\|\cdot\|_{(u)}$ becomes a norm greater than $\|\cdot\|$. Let $A\in\mathbb{B}(\mathcal{H})$  and $\sum_{j=1}^{m}D_j^*D_j=I$. By definition \eqref{u}, for every $0<\epsilon<1$ and for every $j=1,\ldots,m$, there exist $C_{ij}$ such that
 \begin{align*}
  \sum_{i=1}^{k}C_{ij}^*C_{ij}=I \quad \ \  \mbox{and} \ \quad  \ (1-\epsilon)\|D_jAD_j^*\|_{(u)}\leq \sum_{i=1}^{k}\|C_{ij}D_jAD_j^*C_{ij}^*\|,
   \end{align*}
so that
 \begin{align}\label{me1}
(1-\epsilon)\sum_{j=1}^{m}\|D_jAD_j^*\|_{(u)}\leq \sum_{j=1}^{m}\sum_{i=1}^{k}\|C_{ij}D_jAD_j^*C_{ij}^*\|.
   \end{align}
Since
$$\sum_{j=1}^{m}\sum_{i=1}^{k}D_j^*C_{ij}^*C_{ij}D_j
=\sum_{j=1}^{m}D_j^*\left(\sum_{i=1}^{k}C_{ij}^*C_{ij}\right)D_j=\sum_{j=1}^{m}D_j^*D_j=I,$$
we have by \eqref{u} that
\begin{align}\label{me2}
\sum_{j=1}^{m}\sum_{i=1}^{k}\|C_{ij}D_jAD_j^*C_{ij}^*\|\leq \|A\|_{(u)}.
\end{align}
Hence $(1-\epsilon)\sum_{j=1}^{m}\|D_jAD_j^*\|_{(u)}\leq\|A\|_{(u)}$ by \eqref{me1} and \eqref{me2}.  Letting $\epsilon\to 0$ we conclude that $\|\cdot\|_{(u)}$ is a $L$-norm.

Finally, let $\||\cdot\||$ be a $L$-norm and greater than $\|\cdot\|$. If $A\in\mathbb{B}(\mathcal{H})$ and $\sum_{i=1}^{k}C_i^*C_i=I$, then
$$\sum_{i=1}^{k}\|C_iAC_i^*\|\leq\sum_{i=1}^{k}\||C_iAC_i^*\|| \leq \||A\||.$$
Now definition \eqref{u} implies that $\|A\|_{(u)}\leq  \||A\||$. Therefore, $\|\cdot\|_{(u)}$ is the minimum element.
\end{proof}

%%%=====================================================================================
\begin{corollary}\label{co11}
  The class of $M$-norms on $\mathbb{M}_n$  less  than a given norm $\|\cdot\|$
possesses a minimum element.
\end{corollary}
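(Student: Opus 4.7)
The plan is to derive this corollary from Theorem \ref{th11} by duality, with Lemma \ref{lem2} as the bridge between $M$- and $L$-norms. Given the norm $\|\cdot\|$ on $\mathbb{M}_n$, I would first apply Theorem \ref{th11} to the dual norm $\|\cdot\|_*$, obtaining the minimum $L$-norm $\nu$ satisfying $\nu\geq\|\cdot\|_*$. By Lemma \ref{lem2}, the dual $\nu_*$ is then an $M$-norm, and I claim $\nu_*$ is the extremal $M$-norm whose existence the corollary asserts.

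The verification rests on the fact that the assignment $\|\cdot\|\mapsto\|\cdot\|_*$ is an order-reversing involution on norms of $\mathbb{M}_n$, a standard finite-dimensional fact resting on the double-dual identity $(\|\cdot\|_*)_*=\|\cdot\|$. From $\nu\geq\|\cdot\|_*$ one immediately obtains $\nu_*\leq\|\cdot\|$, so $\nu_*$ indeed lies in the class of $M$-norms dominated by $\|\cdot\|$. For the extremality property, let $\||\cdot\||$ be any $M$-norm with $\||\cdot\||\leq\|\cdot\|$; Lemma \ref{lem2} makes $\||\cdot\||_*$ an $L$-norm, and order-reversal yields $\||\cdot\||_*\geq\|\cdot\|_*$. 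The minimality clause of Theorem \ref{th11} then forces $\||\cdot\||_*\geq\nu$, which redualizes to $\||\cdot\||\leq\nu_*$. Hence every $M$-norm dominated by $\|\cdot\|$ is itself dominated by $\nu_*$, which is exactly the extremal property dual to the minimum-element statement of Theorem \ref{th11}.

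There is no substantive obstacle in the argument beyond invoking Theorem \ref{th11} and Lemma \ref{lem2} and carefully tracking order-reversal through the duality pairing; the whole corollary reduces to the existence of the minimum $L$-norm already established. The one auxiliary fact not formally stated earlier that must be used is the double-duality $(\|\cdot\|_*)_*=\|\cdot\|$ on the finite-dimensional space $\mathbb{M}_n$, which follows from the Hahn--Banach theorem in finite dimensions and is standard.
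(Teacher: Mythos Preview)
Your proposal is correct and follows essentially the same route as the paper: pass to the dual norm, invoke Theorem~\ref{th11} to obtain the minimum $L$-norm above $\|\cdot\|_*$, and then dualize back via Lemma~\ref{lem2} and the order-reversing property of $\|\cdot\|\mapsto\|\cdot\|_*$. Note that both you and the paper's own proof actually establish that the class in question has a \emph{maximum} element; the word ``minimum'' in the corollary statement is a typo (compare the abstract and the last line of the paper's proof).
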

\begin{proof}
 Set
 $$\mathscr{P}=\left\{\||\cdot\||;\ \ \||\cdot\||\ \mbox{is a $M$-norm and }\ \  \||\cdot\||\leq \|\cdot\|\right\}$$
 and
 $$\mathscr{P}_*=\left\{\||\cdot\||_*;\ \ \||\cdot\||\in\mathscr{P}\right\}=\left\{\||\cdot\||_*;\ \ \||\cdot\||_*\ \mbox{is a $L$-norm and }\ \  \||\cdot\||_*\geq \|\cdot\|_*\right\}.$$
 Theorem \ref{th11} implies that $\mathscr{P}_*$ has a minimum element, say $\|\cdot\|_{*(u)}$. Duality of norms and Lemma \ref{lem2} ensure that $\|\cdot\|_{(u)}$ is a $M$-norm. Moreover,  $\|\cdot\|_{(u)}$ is the maximum element of $\mathscr{P}$ by definition.
\end{proof}
%%%======================================================================================
Consider the operator norm on $\mathbb{M}_n$. Theorem \ref{th11} guarantees that the class of $L$-norms grater than the operator norm  has  a minimum element. In the next result we show that the trace norm $\|\cdot\|_1$  is the minimum one along the class of $L$-norms   bounded below  by the operator norm.

\begin{theorem}
The trace norm $\|\cdot\|_1$ is the minimum element in the class of $L$-norms greater than the operator norm $\|\cdot\|_\infty$, i.e.,
 \begin{align*}
  \|A\|_1=\sup\ \left\{\sum_{i=1}^{k}\|C_iAC_i^*\|_{\infty};\ \  \sum_{i=1}^{k}C_i^*C_i=I\right\}.
 \end{align*}
\end{theorem}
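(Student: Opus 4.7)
Plan: denote the right-hand side by $\|A\|_{(u)}$, in the notation of Theorem \ref{th11} applied to $\|\cdot\|=\|\cdot\|_\infty$. I will prove both inequalities $\|A\|_{(u)}\le\|A\|_1$ and $\|A\|_{(u)}\ge\|A\|_1$.

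For the upper bound, I use that $\|\cdot\|_1$ is itself an $L$-norm (as noted in the example after Lemma \ref{lem2}) together with the elementary estimate $\|X\|_\infty\le\|X\|_1$ valid for every $X\in\mathbb{M}_n$. So whenever $\sum_i C_i^*C_i=I$,
$$\sum_i\|C_iAC_i^*\|_\infty\;\le\;\sum_i\|C_iAC_i^*\|_1\;\le\;\|A\|_1,$$
taking the supremum yields $\|A\|_{(u)}\le\|A\|_1$.

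For the lower bound I test with rank-one families. Fix any orthonormal basis $\{f_1,\ldots,f_n\}$ of $\mathbb{C}^n$ and any unit vector $p$, and set $C_i:=pf_i^*$. Then $C_i^*C_i=f_i(p^*p)f_i^*=f_if_i^*$, so $\sum C_i^*C_i=I$, and
$$C_iAC_i^*=\la f_i,Af_i\ra\,pp^*,\qquad \|C_iAC_i^*\|_\infty=|\la f_i,Af_i\ra|.$$
Hence $\|A\|_{(u)}\ge\sum_i|\la f_i,Af_i\ra|$ for every ONB $\{f_i\}$, and the task reduces to the identity
$$\sup_{\{f_i\}\text{ ONB}}\sum_i|\la f_i,Af_i\ra|=\|A\|_1.$$

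The main step is establishing this identity. Given an ONB $\{f_i\}$, let $V$ be the unitary with $Ve_i=f_i$ (so $(V^*AV)_{ii}=\la f_i,Af_i\ra$), and choose phases $\theta_i$ so that $e^{-i\theta_i}\la f_i,Af_i\ra=|\la f_i,Af_i\ra|$. Put $D:=\mathrm{diag}(e^{-i\theta_1},\ldots,e^{-i\theta_n})$; then by cyclicity of the trace,
$$\sum_i|\la f_i,Af_i\ra|=\mathrm{Tr}(DV^*AV)=\mathrm{Tr}\bigl((VDV^*)A\bigr).$$
Because every unitary matrix is unitarily diagonalizable, as $\{f_i\}$ and the phases $\theta_i$ vary the matrix $W:=VDV^*$ ranges over all unitaries in $\mathbb{M}_n$. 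Thus
$$\sup_{\{f_i\}}\sum_i|\la f_i,Af_i\ra|=\sup_{W\text{ unitary}}|\mathrm{Tr}(WA)|=\|A\|_1,$$
where the last equality follows from the singular value decomposition $A=V_1\Sigma V_2^*$: for any unitary $W$ the matrix $V_2^*WV_1$ is unitary with diagonal entries of modulus at most one, giving $|\mathrm{Tr}(WA)|\le\sum_i\sigma_i(A)=\|A\|_1$, and equality is attained at $W=V_2V_1^*$.

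The main obstacle is recognising the correct lower-bound construction and the identity in the last paragraph. Naive choices such as diagonalising $|A|$ and taking $C_i=f_if_i^*$ produce $\sum\sigma_i|\la f_i,Uf_i\ra|$ (with $A=U|A|$), which falls short of $\|A\|_1$ unless $A$ is normal; one must exploit the freedom to rotate the whole basis $\{f_i\}$, so that the supremum absorbs the phase interaction between $U$ and $|A|$, and identify the resulting quantity with the unitary-trace characterisation $\|A\|_1=\sup_W|\mathrm{Tr}(WA)|$ of the trace norm.
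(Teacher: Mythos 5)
Your argument is correct and follows essentially the same route as the paper: both halves hinge on reducing the supremum to $\sup_{\{f_i\}\ \mathrm{ONB}}\sum_i|\la f_i,Af_i\ra|$ and identifying this with $\|A\|_1$ through the spectral decomposition of a unitary (your optimal $W=V_2V_1^*$ from the SVD is exactly the adjoint of the polar unitary the paper uses via $\|A\|_1=\mathrm{Tr}(AU^*)$), and the upper bound is the identical appeal to $\|\cdot\|_\infty\le\|\cdot\|_1$ plus the $L$-norm property of $\|\cdot\|_1$. The only cosmetic differences are your test family $C_i=pf_i^*$ in place of the paper's orthogonal rank-one projections $E_i=x_ix_i^*$, and phrasing the trace identity as $\|A\|_1=\sup_W|\mathrm{Tr}(WA)|$ rather than invoking the polar decomposition directly.
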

\begin{proof}
For any matrix $A\in\mathbb{M}_n$ and every orthonormal system $x_j$\ $(j=1,\ldots,n)$ we can write
\begin{align*}
  \sum_{j=1}^{n}|\langle x_j,Ax_j\rangle|\leq\sum_{j=1}^{n}\|E_jAE_j\|_{\infty}\leq\sum_{j=1}^{n}\|E_jAE_j\|_1\leq \|A\|_1,
\end{align*}
 in which $E_j$ is the rank-one  orthoprojection to $\mathbb{C}x_j$.  Moreover, assume that $A=U|A|$ be the polar decomposition of $A$ with unitary $U$. Then
 \begin{align}\label{e10}
   \|A\|_1=\mbox{Tr}(|A|)=\mbox{Tr}(U|A|U^*)=\mbox{Tr}(AU^*).
 \end{align}
 Let
 \begin{align}\label{e11}
  U^*=\sum_{j=1}^{n}e^{i\theta_j}x_jx_j^*
 \end{align}
 be the spectral decomposition of unitary $U^*$, where $x_j$'s are the (column) unit eigenvectors ( so that $x_j^*$'s are row vectors and $x_jx_j^*=E_j$ is the rank-one projection onto the subspace $\mathbb{C}x_j$). Now, it follows from \eqref{e10} and \eqref{e11} that
  {\small\begin{align}\label{q211}
   \|A\|_1=\mbox{Tr}(AU^*)=\sum_{j=1}^{n}e^{i\theta_j}\mbox{Tr}(AE_j)=\sum_{j=1}^{n}e^{i\theta_j}\langle x_j,Ax_j\rangle
   \leq\sum_{j=1}^{n}|\langle x_j,Ax_j\rangle|\leq\sum_{j=1}^{n}\|E_jAE_j\|_{\infty}.
 \end{align}}
This implies that
 \begin{align*}
   \|A\|_1&\leq \sup\ \left\{\sum_{i=1}^{n}|\langle x_j,Ax_j\rangle|;\ \ \mbox{$\{x_j\}$ is an orthonormal system in}\  \mathbb{C}^n \right\}\quad(\mbox{by \eqref{q211}})\\
  & \leq \sup \left\{\sum_{i=1}^{n}\|E_iAE_i^*\|_{\infty};\ \ \mbox{$E_i$'s are rank-one orthoprojections},\ \  \sum_{i=1}^{n}E_i=I\right\}\quad(\mbox{by \eqref{q211}})\\
  &\leq \sup\ \left\{\sum_{i=1}^{k}\|C_iAC_i^*\|_{\infty};\ \  \sum_{i=1}^{k}C_i^*C_i=I\right\}\\
  &\leq \sup\ \left\{\sum_{i=1}^{k}\|C_iAC_i^*\|_1;\ \  \sum_{i=1}^{k}C_i^*C_i=I\right\}\\
  &\leq \|A\|_1 \qquad \quad(\mbox{Since $\|\cdot\|_1$ is a $L$-norm }).
  \end{align*}
    Therefore, they are all equal and the proof is complete.
\end{proof}
%%%=======================================================
%%%===========================================================================

%%%=============================================================================
We give another example for Theorem \ref{th11} in the next theorem.
\begin{theorem}\label{th33}
  The trace norm $\|\cdot\|_1$ is the minimum element in the class of $L$-norms greater than the
numerical radius norm $\omega(\cdot)$.
\end{theorem}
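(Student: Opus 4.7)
The plan is to mirror the structure of the preceding theorem, replacing $\|\cdot\|_\infty$ with $\omega(\cdot)$ throughout, and exploiting the fact that $\omega$ behaves exactly like $\|\cdot\|_\infty$ when restricted to rank-one compressions.

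First I would verify that $\|\cdot\|_1$ is itself an $L$-norm dominating $\omega$; the former is already established in the earlier example, and the latter is the standard inequality $\omega(A)\le\|A\|_\infty\le\|A\|_1$. So $\|\cdot\|_1$ is a candidate, and it remains to show it is the smallest. By Theorem \ref{th11}, it is enough to prove that
\begin{align*}
\|A\|_1\le\sup\left\{\sum_{i=1}^{k}\omega(C_iAC_i^*);\ \sum_{i=1}^{k}C_i^*C_i=I\right\}\le\|A\|_1
\end{align*}
for every $A\in\mathbb{M}_n$, since the supremum is exactly the minimum $L$-norm greater than $\omega$ given by \eqref{u}.

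The upper bound on the supremum is immediate from $\omega(\cdot)\le\|\cdot\|_1$ combined with the $L$-norm property of $\|\cdot\|_1$: for any $\sum C_i^*C_i=I$,
\begin{align*}
\sum_{i=1}^{k}\omega(C_iAC_i^*)\le\sum_{i=1}^{k}\|C_iAC_i^*\|_1\le\|A\|_1.
\end{align*}
For the lower bound, I would repeat the polar/spectral decomposition trick from the previous proof. Write $A=U|A|$ and diagonalize $U^*=\sum_{j=1}^{n}e^{i\theta_j}x_jx_j^*$ with $\{x_j\}$ orthonormal. Setting $E_j=x_jx_j^*$ (so $\sum E_j=I$ is a resolution of the identity by rank-one projections, which are in particular valid $C_j$'s with $C_j=C_j^*=E_j$), we get
\begin{align*}
\|A\|_1=\mathrm{Tr}(AU^*)=\sum_{j=1}^{n}e^{i\theta_j}\langle x_j,Ax_j\rangle\le\sum_{j=1}^{n}|\langle x_j,Ax_j\rangle|.
\end{align*}
The key observation is now that $E_jAE_j=\langle x_j,Ax_j\rangle E_j$ is a scalar multiple of a rank-one projection, and $\omega(\lambda E_j)=|\lambda|\omega(E_j)=|\lambda|$ since a rank-one projection has numerical radius $1$. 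Hence $|\langle x_j,Ax_j\rangle|=\omega(E_jAE_j)$, and plugging this into the previous line yields
\begin{align*}
\|A\|_1\le\sum_{j=1}^{n}\omega(E_jAE_j)\le\sup\left\{\sum_{i=1}^{k}\omega(C_iAC_i^*);\ \sum_{i=1}^{k}C_i^*C_i=I\right\},
\end{align*}
which closes the sandwich.

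There is no serious obstacle here since the argument parallels the previous theorem almost verbatim; the only non-routine ingredient is the identity $\omega(E_jAE_j)=|\langle x_j,Ax_j\rangle|$, which is precisely where the replacement $\|\cdot\|_\infty\rightsquigarrow\omega(\cdot)$ costs nothing because rank-one compressions agree under the two norms. The proof therefore proceeds by simply noting this at the right moment.
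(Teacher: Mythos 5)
Your proof is correct and takes essentially the same route as the paper: the same polar-decomposition orthonormal system $\{x_j\}$, the key identity $|\langle x_j,Ax_j\rangle|=\omega(E_jAE_j)$ for $E_j=x_jx_j^*$, and the $L$-norm property applied to the rank-one projections summing to $I$. The only difference is packaging: you show the explicit minimum $\|\cdot\|_{(u)}$ of Theorem \ref{th11} (built from $\omega$) coincides with $\|\cdot\|_1$, while the paper compares $\|\cdot\|_1$ directly against an arbitrary $L$-norm dominating $\omega(\cdot)$; the two formulations are equivalent.
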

\begin{proof}
  Assume that $\|\cdot\|$ is a $L$-norm with $\omega(\cdot)\leq\|\cdot\|$. For every $A\in\mathbb{M}_n$, there exits an orthonormal system of vectors $\{x_j\}$ such that
  \begin{align*}
\|A\|_1=\sum_{j=1}^{n}|\langle x_j,Ax_j\rangle|&\leq\sum_{j=1}^{n}\omega(E_jAE_j)\\
&\leq\sum_{j=1}^{n}\|E_jAE_j\|\leq \|A\|,
  \end{align*}
  where $E_j=x_jx_j^*$. This ensures that  $\|\cdot\|_1$ is the minimum $L$-norm greater than the
numerical radius norm.
\end{proof}
%%%===============================================================
\begin{corollary}
  The operator norm $\|\cdot\|_{\infty}$ is the maximum element in the class of  $M$-norms smaller than
$\omega_*(\cdot)$, the dual norm of $\omega(\cdot)$.
\end{corollary}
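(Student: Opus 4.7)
The plan is to obtain this corollary by dualizing Theorem \ref{th33}, which is exactly the $L$-norm version of the same statement. The three duality ingredients I will use are (i) Lemma \ref{lem2}, which exchanges the $M$-norm and $L$-norm conditions under the dual operation; (ii) the order-reversing property of the dual-norm map (if $N_1\leq N_2$ then $N_1^{\,*}\geq N_2^{\,*}$, by inclusion of unit balls); and (iii) the bipolar identity $N_{**}=N$ on the finite-dimensional space $\mathbb{M}_n$.

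First I would confirm that $\|\cdot\|_{\infty}$ qualifies as a member of the class. Example \ref{spn} establishes that it is an $M$-norm, and the inequality $\|\cdot\|_{\infty}\leq\omega_{*}(\cdot)$ follows by dualizing the elementary bound $\omega(\cdot)\leq\|\cdot\|_{\infty}\leq\|\cdot\|_{1}$ on $\mathbb{M}_n$: order reversal gives $\|\cdot\|_{\infty}=(\|\cdot\|_{1})_{*}\leq(\omega(\cdot))_{*}=\omega_{*}(\cdot)$.

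Next, let $\||\cdot\||$ be an arbitrary $M$-norm with $\||\cdot\||\leq\omega_{*}(\cdot)$. By Lemma \ref{lem2} the dual norm $\||\cdot\||_{*}$ is an $L$-norm, and order-reversal of duality together with $(\omega_{*})_{*}=\omega$ yields $\omega(\cdot)\leq\||\cdot\||_{*}$. Theorem \ref{th33} then forces $\|\cdot\|_{1}\leq\||\cdot\||_{*}$, and dualizing once more (using reflexivity) gives $\||\cdot\||=\||\cdot\||_{**}\leq(\|\cdot\|_{1})_{*}=\|\cdot\|_{\infty}$, which is exactly the required maximality.

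I do not expect any substantive obstacle: the argument is entirely formal once Theorem \ref{th33} and the $M$/$L$ correspondence of Lemma \ref{lem2} are combined with the standard reflexivity and order-reversing features of duality on $\mathbb{M}_n$. The only point worth stating explicitly in the write-up is the identification $N_{**}=N$, since otherwise the final dualization step would not close the loop.
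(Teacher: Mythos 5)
Your argument is correct and is essentially the paper's own proof made explicit: the paper simply cites Corollary \ref{co11} together with Theorem \ref{th33}, which amounts to exactly the duality chain you spell out (Lemma \ref{lem2}, order reversal of the dual map, and reflexivity $N_{**}=N$ on $\mathbb{M}_n$). Your additional verification that $\|\cdot\|_{\infty}$ itself is an $M$-norm bounded by $\omega_{*}(\cdot)$ is a welcome detail, but the route is the same.
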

\begin{proof}
  Apply Corollary \ref{co11} and Theorem \ref{th33}.
\end{proof}
%%%%%%%%%%%%%%%%%%%%%%%%%%%%%%%%%%%%%%%%%%%%%%%%%%%%%%%%%%%%%%%%%%%%%%%%%%%%%%%%%%%%%%%%%%%%%%%%%%%%%%
A natural question is that does the class of $L$-norms $\||\cdot\||$  which are less than a norm $\|\cdot\|$  contain a maximum element? or does the class of $M$-norms $\||\cdot\||$ which are  greater than a norm $\|\cdot\|$ contain a minimum element?

We answer these questions in some special cases below. First we need the following simple lemma.
\begin{lemma}\label{1-norm}
If $P$ and $Q$ are orthoprojections in $\mathbb{B}(\mathcal{H})$, then
$$\|PXP+QYQ\|_1=\|PXP\|_1+\|QYQ\|_1$$
for every $X,Y\in\mathbb{L}^1(\mathcal{H})$.
  \end{lemma}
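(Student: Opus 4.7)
The key structural fact I will exploit is that $P$ and $Q$ must be mutually orthogonal projections (i.e.\ $PQ=0$) for the identity to hold; without that assumption even simple examples like $P=Q=I$ with $X=-Y$ break it, so I will take this to be the implicit hypothesis. Given $PQ=0$, the operators $PXP$ and $QYQ$ live on orthogonal subspaces of $\mathcal{H}$, and that is essentially all one needs.

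My plan is to set $R=I-P-Q$ and use the orthogonal decomposition $\mathcal{H}=P\mathcal{H}\oplus Q\mathcal{H}\oplus R\mathcal{H}$. With respect to this decomposition, $PXP$ has block form $\mathrm{diag}(A,0,0)$ where $A=PXP|_{P\mathcal{H}}$, and $QYQ$ has block form $\mathrm{diag}(0,B,0)$ where $B=QYQ|_{Q\mathcal{H}}$. Consequently
\[
PXP+QYQ=\mathrm{diag}(A,B,0).
\]

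From here I would invoke the fact that for a block-diagonal trace class operator $T=\mathrm{diag}(T_1,T_2,0)$ one has $|T|=\mathrm{diag}(|T_1|,|T_2|,0)$. This is a direct consequence of the uniqueness of the positive square root applied to $T^*T=\mathrm{diag}(T_1^*T_1,T_2^*T_2,0)$, or equivalently of the polar decomposition computed blockwise. Taking traces then yields
\[
\|PXP+QYQ\|_1=\mathrm{Tr}(|A|)+\mathrm{Tr}(|B|)=\|PXP\|_1+\|QYQ\|_1,
\]
where the last equality uses that the embeddings of $A$ into $\mathbb{B}(\mathcal{H})$ as $PXP$ and of $B$ as $QYQ$ preserve the trace norm (padding by zero blocks leaves singular values unchanged).

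There is no real obstacle here; the only point requiring any care is the identity $|T|=\mathrm{diag}(|T_1|,|T_2|,0)$ for block-diagonal $T$, and that is standard. The whole proof is essentially a one-line observation once the orthogonality of the supports is noted, so I would keep the write-up short.
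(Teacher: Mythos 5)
Your proof is correct, and it takes a genuinely different route from the paper's. You are also right to flag the missing hypothesis: as literally stated the lemma is false (take $P=Q=I$, $Y=-X$), and what is actually used later (with mutually orthogonal rank-one projections $E_j$) is the case $PQ=0$; the paper's own proof also silently uses more than is stated, since its matrix $U=\left[\begin{smallmatrix}P& Q\\ Q& P\end{smallmatrix}\right]$ is unitary only when $Q=I-P$, and it invokes ``orthogonality of $P$ and $Q$'' to expand $(PXP+QYQ)^*(PXP+QYQ)$. Methodologically, the paper proves only the nontrivial inequality $\|PXP\|_1+\|QYQ\|_1\le\|PXP+QYQ\|_1$ (the reverse being the triangle inequality) by a trace-preservation trick with the $2\times 2$ block unitary together with a Jensen-type trace inequality for $t\mapsto t^{1/2}$ applied to the compressions by $P$ and $Q$. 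Your argument instead decomposes $\mathcal{H}=P\mathcal{H}\oplus Q\mathcal{H}\oplus(I-P-Q)\mathcal{H}$ (legitimate since $PQ=0$ makes $I-P-Q$ a projection), writes $PXP+QYQ=\mathrm{diag}(A,B,0)$, and uses that the absolute value of a block-diagonal operator is block diagonal, which follows from uniqueness of the positive square root; padding by zero blocks does not change the trace norm, so the equality drops out in one step. Your route is more elementary (no Jensen inequality, no unitary dilation), yields the equality directly rather than via two inequalities, works verbatim for trace-class operators on an infinite-dimensional $\mathcal{H}$, and covers the general mutually orthogonal case (not just $Q=I-P$) without any extra reduction; what the paper's argument buys in exchange is essentially nothing here beyond illustrating the trace--Jensen technique.
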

\begin{proof}
  First note that $\mathrm{Tr}(U^*AU)=\mathrm{Tr}(A)$ for every $A\in\mathbb{L}^1(\mathcal{H})$ and every unitary $U$. If $P$ and $Q$ are orthoprojections in $\mathbb{B}(\mathcal{H})$, then $U=\left[\begin{array}{cc}P& Q\\
  Q &P\end{array}\right]$ is a unitary in $\mathbb{B}(\mathcal{H}\oplus\mathcal{H})$. If $A,B\in\mathbb{L}^1(\mathcal{H})$, then
  \begin{align}\label{q1}
  \mathrm{Tr}(PAP+QBQ)+\mathrm{Tr}(QAQ+PBP)&=    \mathrm{Tr}\left(\left[\begin{array}{cc}PAP+QBQ& PAQ+QBP\\
  QAP+PBQ &QAQ+PBP\end{array}\right]\right)\nonumber\\
  &=\mathrm{Tr}\left(\left[\begin{array}{cc}P& Q\\
  Q &P\end{array}\right]\left[\begin{array}{cc}A& 0\\
  0 &B\end{array}\right]\left[\begin{array}{cc}P& Q\\
  Q &P\end{array}\right]\right)\nonumber\\
  &=\mathrm{Tr}\left(\left[\begin{array}{cc}A& 0\\
  0 &B\end{array}\right]\right)=\mathrm{Tr}(A)+\mathrm{Tr}(B).
  \end{align}
  Put $A=(PX^*PXP)^{\frac{1}{2}}=|PXP|$ and $B=(QY^*QYQ)^{\frac{1}{2}}=|QYQ|$. Then
  \begin{align}\label{q2}
\mathrm{Tr}(PAP+QBQ)&=\mathrm{Tr}\left(P\left(PX^*PXP\right)^{\frac{1}{2}}P+Q(QY^*QYQ)^{\frac{1}{2}}Q\right)\nonumber\\
&\leq \mathrm{Tr}\left(PX^*PXP+QY^*QYQ\right)^{\frac{1}{2}}\quad(\mbox{by the Jensen inequality})\nonumber\\
&=\mathrm{Tr}\left(((PXP+QYQ)^*(PXP+QYQ))^{\frac{1}{2}}\right)\nonumber\\
&\qquad\qquad\qquad\qquad\qquad\qquad\quad(\mbox{by orthogonality of $P$ and $Q$})\nonumber\\
&=\mathrm{Tr}(|PXP+QYQ|)=\|PXP+QYQ\|_1.
  \end{align}
  Moreover,
    \begin{align}\label{q3}
\mathrm{Tr}(QAQ+PBP)&=\mathrm{Tr}\left(Q\left(PX^*PXP\right)^{\frac{1}{2}}Q+P(QY^*QYQ)^{\frac{1}{2}}P\right)\nonumber\\
&\leq \mathrm{Tr}\left(QPX^*PXPQ+PQY^*QYQP\right)^{\frac{1}{2}}\quad(\mbox{by the Jensen inequality})\nonumber\\
&=0.
  \end{align}

  It follows from \eqref{q1} and \eqref{q2} and \eqref{q3} that
  \begin{align*}
\|PXP\|_1+\|QYQ\|_1=\mathrm{Tr}(|PXP|)+\mathrm{Tr}(|QYQ|)=\mathrm{Tr}(A)+\mathrm{Tr}(B)&\leq\|PXP+QYQ\|_1.
  \end{align*}
  This completes the proof.
\end{proof}
%%%%%%%%%%%%%%%%%%%%%%%%%%%%%%%%%%%%%%%%%%%%%%%%%%%%%%%%%%%%%%%%%%%%%%
\begin{theorem}\label{th44}
   The norm $n\cdot \omega(\cdot)$ is the minimum element  in the class of $M$-norm greater than the trace norm $\|\cdot\|_1$.
\end{theorem}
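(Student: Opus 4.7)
The plan is to verify three things in succession: that $n\omega(\cdot)$ is itself an $M$-norm, that $n\omega(\cdot)\geq\|\cdot\|_1$, and that no strictly smaller $M$-norm still dominates the trace norm.

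For the first point, I would check directly that $\omega(\cdot)$ satisfies \eqref{c1}: if $\sum_{i=1}^{k}C_i^*C_i=I$ and $\|x\|=1$, then
\[
\left|\left\langle x,\sum_{i=1}^{k}C_i^*A_iC_i\,x\right\rangle\right|\leq \sum_{i=1}^{k}|\langle C_ix,A_iC_ix\rangle|\leq \sum_{i=1}^{k}\omega(A_i)\|C_ix\|^2\leq \max_{i}\omega(A_i),
\]
where the last step uses $\sum_i\|C_ix\|^2=\langle x,x\rangle=1$; scaling by $n$ preserves this. For the second point, the inequality $\|A\|_1\leq n\omega(A)$ is already inside the proof of Theorem \ref{th33}: writing $A=U|A|$ and spectrally decomposing $U^*=\sum_{j}e^{i\theta_j}x_jx_j^*$ yields $\|A\|_1=\mathrm{Tr}(AU^*)=\sum_j e^{i\theta_j}\langle x_j,Ax_j\rangle\leq \sum_j|\langle x_j,Ax_j\rangle|\leq n\,\omega(A)$.

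The heart of the argument is minimality. Let $\||\cdot\||$ be any $M$-norm with $\||\cdot\||\geq\|\cdot\|_1$. By Proposition \ref{th22}(1) there is $\alpha>0$ with $\||B\||=\alpha\|B\|_\infty$ for every normal $B\in\mathbb{M}_n$; in particular $\||E\||=\alpha$ for every rank-one orthoprojection $E$. Evaluating at $B=I$ gives $\alpha=\||I\||\geq\|I\|_1=n$. For any $A\in\mathbb{M}_n$ and any unit vector $x$, setting $E=xx^*$ one has $EAE=\langle x,Ax\rangle E$, so Lemma \ref{lem5} yields
\[
\alpha\,|\langle x,Ax\rangle|=\||EAE\||\leq\||A\||.
\]
Taking the supremum over $\|x\|=1$ and combining with $\alpha\geq n$ delivers $n\,\omega(A)\leq\||A\||$.

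The main obstacle I would expect is seeing how to extract a quantitative bound from the qualitative hypothesis. The key observation is that Proposition \ref{th22} reduces any $M$-norm to a scalar multiple of $\|\cdot\|_\infty$ on normal matrices, so the single inequality $\||I\||\geq n$ forces $\||E\||\geq n$ for \emph{every} rank-one projection $E$; once this is in hand, Lemma \ref{lem5} (used as a compression estimate $EAE\preceq A$ in the $\||\cdot\||$-norm) converts the pointwise bound $|\langle x,Ax\rangle|$ into the numerical radius on the right-hand side, closing the loop.
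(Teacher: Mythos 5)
Your proposal is correct, and its key (minimality) step takes a genuinely different route from the paper's. The first two parts coincide in substance: the paper cites Loebl--Paulsen for the $C^*$-convexity of the numerical-radius unit ball where you verify \eqref{c1} directly (your computation with $\sum_i\|C_ix\|^2=1$ is fine), and it obtains $\|A\|_1\le n\,\omega(A)$ from the same polar-decomposition/orthonormal-system argument as in Theorem \ref{th33}. For minimality, however, the paper works pointwise at a unit vector $x$ attaining $\omega(A)$: it builds an orthonormal basis $\{x_j\}$ with $x_1=x$ and unitaries $U_j$ with $U_jx_j=x$, uses the auxiliary Lemma \ref{1-norm} (additivity of $\|\cdot\|_1$ over orthogonal compressions) to rewrite $n\,\omega(A)$ as $\bigl\|\sum_{j}E_jU_j^*AU_jE_j\bigr\|_1$, and then applies the hypothesis $\|\cdot\|_1\le\||\cdot\||$ to that single matrix together with \eqref{c1} and weak unitary invariance to bound it by $\||A\||$. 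You instead invoke the rigidity result of Proposition \ref{th22}(1) to identify $\||\cdot\||$ with $\alpha\|\cdot\|_\infty$ on normal matrices, extract $\alpha\ge n$ from the single inequality $\||I\||\ge\|I\|_1=n$, and convert this into $\alpha\,\omega(A)\le\||A\||$ via the compression estimate of Lemma \ref{lem5} at rank-one projections. Your route is shorter, avoids Lemma \ref{1-norm} and the unitary construction entirely, and isolates exactly what the hypothesis contributes: it actually proves the slightly stronger statement that any $M$-norm $\||\cdot\||$ with $\||I\||\ge n$ dominates $n\,\omega(\cdot)$. The paper's route stays closer to the definition of an $M$-norm and does not depend on Proposition \ref{th22}, at the cost of the extra trace-norm lemma and the auxiliary unitaries.
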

\begin{proof}
Note first that the unit ball of the numerical radius norm $\omega(\cdot)$ is $C^*$-convex \cite{Loebl} and so $\omega(\cdot)$ is a  $M$-norm.
 For every $A\in\mathbb{M}_n$ there exists an orthonormal system of vectors $\{x_j\}_{j=1}^{n}$  of $\mathbb{C}^n$ such that $\|A\|_1=\sum_{j=1}^{n}|\langle x_j,Ax_j\rangle|\leq\sum_{j=1}^{n}\omega(A)= n\ \omega(A) $. So, $n\ \omega(\cdot)$ is a  $M$-norm greater than the trace norm $\|\cdot\|_1$.

 Now let $\|\cdot\|$ be a  $M$-norm greater than the trace norm. For any $A\in\mathbb{M}_n$, there is $x\in\mathbb{C}^n$ such that $\|x\|=1$ and $\omega(A)= |\langle x,Ax\rangle|$. Starting with $x_1 := x$, construct a coordinate orthonormal system $\{x_j\}$ in $\mathbb{C}^n$. Assume that $U_j$ is the unitary such that $U_jx_j=x$. Let $E_j:=x_jx_j^*$ be the rank-one orthoprojection to $\mathbb{C}x_j$.  Note that
 \begin{align}\label{w}
   \|E_jU_j^*AU_jE_j\|_1&=\|\langle x_j,(U_j^*AU_j)x_j\rangle E_j\|_1\nonumber\\
   &= |\langle x_j,(U_j^*AU_j)x_j\rangle|=|\langle U_jx_j,AU_jx_j\rangle|=|\langle x,Ax\rangle|
   =\omega(A).
    \end{align}
    Since $\|\cdot\|$ is weakly unitary invariant and $\|\cdot\|_1\leq\|\cdot\|$, we have
     \begin{align*}
     n\cdot \omega(A)=n\ |\langle x,Ax\rangle|&=\sum_{j=1}^{n}\|E_jU_j^*AU_jE_j\|_1\\
     &=\left\|\sum_{j=1}^{n}E_jU_j^*AU_jE_j\right\|_1\qquad (\mbox{by Lemma \ref{1-norm}})\\
     &\leq \left\|\sum_{j=1}^{n}E_jU_j^*AU_jE_j\right\|\leq \max_{1\leq j\leq n}\|U_j^*AU_j\|=\|A\|.
    \end{align*}
This proves that $ n\cdot \omega(\cdot)$ is the minimum element.
\end{proof}
%%================================================================
\begin{corollary}
The norm $\frac{\omega_*(\cdot)}{n}$  is the maximum element in the class of  $L$-norms smaller than the operator  norm $\|\cdot\|_{\infty}$.
\end{corollary}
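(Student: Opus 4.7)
The plan is to deduce this corollary from Theorem \ref{th44} by a pure duality argument, modeled on the proof of Corollary \ref{co11}. The key facts I will invoke are: Lemma \ref{lem2} (dualizing swaps $M$-norms with $L$-norms); dualization of norms on $\mathbb{M}_n$ is involutive and order-reversing; and for a positive scalar $c$, $(c\|\cdot\|)_* = \tfrac{1}{c}\|\cdot\|_*$. Also, the operator norm $\|\cdot\|_\infty$ has dual $\|\cdot\|_1$, and the numerical radius norm $\omega(\cdot)$ has dual $\omega_*(\cdot)$ by the very definition introduced earlier in the paper.

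First I would set
\[
\mathscr{L} = \bigl\{\,\||\cdot\||:\ \||\cdot\||\ \text{is an $L$-norm on}\ \mathbb{M}_n\ \text{and}\ \||\cdot\|| \leq \|\cdot\|_\infty\,\bigr\},
\]
and form its dual class
\[
\mathscr{L}_* = \bigl\{\,\||\cdot\||_*:\ \||\cdot\||\in\mathscr{L}\,\bigr\}.
\]
By Lemma \ref{lem2}, $\||\cdot\||$ is an $L$-norm iff $\||\cdot\||_*$ is an $M$-norm. Because dualization reverses inequalities on $\mathbb{M}_n$ and $(\|\cdot\|_\infty)_*=\|\cdot\|_1$, the condition $\||\cdot\|| \leq \|\cdot\|_\infty$ translates to $\||\cdot\||_* \geq \|\cdot\|_1$. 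Hence
\[
\mathscr{L}_* = \bigl\{\,\||\cdot\||_*:\ \||\cdot\||_*\ \text{is an $M$-norm on}\ \mathbb{M}_n\ \text{and}\ \||\cdot\||_* \geq \|\cdot\|_1\,\bigr\}.
\]

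Next, Theorem \ref{th44} tells us that $\mathscr{L}_*$ has a minimum element, namely $n\cdot\omega(\cdot)$. Since dualization is an order-reversing involution, it maps the minimum of $\mathscr{L}_*$ to the maximum of $\mathscr{L}$. Therefore the maximum element of $\mathscr{L}$ is the dual norm $(n\cdot\omega(\cdot))_* = \tfrac{1}{n}\,\omega_*(\cdot) = \tfrac{\omega_*(\cdot)}{n}$, which is exactly the asserted formula. The membership $\tfrac{\omega_*(\cdot)}{n}\in\mathscr{L}$ is automatic from the dual statement, and maximality follows because any $\||\cdot\||\in\mathscr{L}$ satisfies $\||\cdot\||_*\geq n\cdot\omega(\cdot)$, hence $\||\cdot\||\leq \tfrac{\omega_*(\cdot)}{n}$.

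There is no real obstacle; the proof is a one-step dualization, essentially identical to the way Corollary \ref{co11} was derived from Theorem \ref{th11}. The only bookkeeping is to make sure the direction of the inequality flips correctly when passing to duals and that the scalar $n$ becomes $1/n$, both of which are standard properties of the dual norm.
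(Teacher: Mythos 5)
Your proof is correct and follows exactly the paper's route: the paper's own proof is the one-line remark that the corollary ``follows from Theorem \ref{th44} via duality of corresponding norms,'' and your argument simply spells out that duality step (order reversal, $(n\,\omega(\cdot))_*=\tfrac{1}{n}\omega_*(\cdot)$, and Lemma \ref{lem2}) in the same way Corollary \ref{co11} was obtained from Theorem \ref{th11}. No gaps.
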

\begin{proof}
  This follows from  Theorem \ref{th44} via duality of corresponding norms.
\end{proof}

%%%%%%%%%%%%%%%%%%%%%%%%%%%%%%%%%%%%%%%%%%%%%%%%%%%%%%%%%%%%%%%%%%%%%%%%%%%%%%%%%%%%%%%%%%%%%%%%%%%%%%%%%%
\section{Construction of $M$-norms and $L$-norms}
If $f:(0,\infty)\to(0,\infty)$ is   an  operator  concave function, the Jensen's operator inequality \cite{HP} implies that
\begin{align}
  f\left(\sum_{j=1}^{k}C_j^*A_jC_j\right)\geq\sum_{j=1}^{k}C_j^*f(A_j)C_j
\end{align}
for all $A_j\geq0$ and $C_j\in\mathbb{B}(\mathcal{H})$ with $\sum_{j=1}^{k}C_j^*C_j=I$.

\begin{theorem}\label{mt}
  Assume that $\varphi(t),\psi(t)>0$ are  matrix concave functions on $(0,1)$. The set
  $$\mathcal{U}_{\varphi,\psi}=\left\{X;\quad \left[
\begin{array}{cc}
 \varphi(A) &  X\\
   X^* & \psi(A)
\end{array}\right]\geq0, \quad \mbox{for some}\  \ 0\leq A\leq I\right\}$$
is $C^*$-convex.
\end{theorem}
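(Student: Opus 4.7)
The plan is to verify the $C^*$-convexity condition directly: given $X_1,\ldots,X_k\in\mathcal{U}_{\varphi,\psi}$ and $C_1,\ldots,C_k\in\mathbb{B}(\mathcal{H})$ with $\sum_i C_i^*C_i=I$, I must exhibit some $A$ with $0\le A\le I$ such that the block matrix built from $\sum_i C_i^*X_iC_i$ and $\varphi(A),\psi(A)$ is positive.

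First I would pick, for each $i$, a witness $A_i$ with $0\le A_i\le I$ making
\[
M_i:=\begin{bmatrix}\varphi(A_i)&X_i\\ X_i^*&\psi(A_i)\end{bmatrix}\ge 0.
\]
Conjugating $M_i$ by the block diagonal operator $\mathrm{diag}(C_i,C_i)$ preserves positivity, and summing over $i$ yields
\[
\sum_{i=1}^{k}\begin{bmatrix}C_i^*\varphi(A_i)C_i & C_i^*X_iC_i\\ C_i^*X_i^*C_i & C_i^*\psi(A_i)C_i\end{bmatrix}\ge 0.
\]
This already produces the desired off-diagonal entry $\sum_i C_i^*X_iC_i$; the issue is that the diagonal entries are $\sum_i C_i^*\varphi(A_i)C_i$ and $\sum_i C_i^*\psi(A_i)C_i$ rather than values of $\varphi$ and $\psi$ at a single operator.

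The natural candidate is $A:=\sum_i C_i^*A_iC_i$. Since $0\le A_i\le I$ and $\sum_i C_i^*C_i=I$, this $A$ satisfies $0\le A\le I$. Now the key input is Jensen's operator inequality for matrix concave functions (recalled at the start of Section~4), which gives
\[
\varphi(A)\ge\sum_{i=1}^{k}C_i^*\varphi(A_i)C_i,\qquad \psi(A)\ge\sum_{i=1}^{k}C_i^*\psi(A_i)C_i.
\]
Therefore the block diagonal difference
\[
\begin{bmatrix}\varphi(A)-\sum_i C_i^*\varphi(A_i)C_i & 0\\ 0 & \psi(A)-\sum_i C_i^*\psi(A_i)C_i\end{bmatrix}
\]
is positive semidefinite, and adding this to the previous sum yields
\[
\begin{bmatrix}\varphi(A) & \sum_i C_i^*X_iC_i\\ \sum_i C_i^*X_i^*C_i & \psi(A)\end{bmatrix}\ge 0,
\]
which is exactly the condition that $\sum_i C_i^*X_iC_i\in\mathcal{U}_{\varphi,\psi}$.

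The only real obstacle is lining up the domain: $\varphi,\psi$ are stated on $(0,1)$ while $A$ and the $A_i$ may have spectrum touching $\{0,1\}$, so one should either read the hypothesis as "matrix concave on $[0,1]$" or pass to a limit via $A_i+\varepsilon I$ and $(1-\varepsilon)A_i$, using continuity of the functional calculus. Apart from this minor regularity point, the proof is a one-shot application of Jensen's operator inequality to upgrade the summed block inequality to one involving a single witness $A$.
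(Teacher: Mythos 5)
Your proof is correct, and it takes a genuinely lighter route than the paper's. The paper extracts from each witness the Schur-complement inequality $\varphi(A_i)\ge X_i\psi(A_i)^{-1}X_i^*$, applies Jensen's operator inequality to $\varphi$, and then invokes the joint convexity (in the $C^*$-convex-combination form) of $g(X,Y)=X\psi(Y)^{-1}X^*$ to bound $\bigl(\sum_i C_i^*X_iC_i\bigr)\psi(A)^{-1}\bigl(\sum_i C_i^*X_i^*C_i\bigr)$ with $A=\sum_i C_i^*A_iC_i$, before reassembling the $2\times 2$ block matrix. You choose the same witness $A$ but never pass to Schur complements: you conjugate each block matrix by $\mathrm{diag}(C_i,C_i)$, sum, and then enlarge both diagonal corners using Jensen applied separately to $\varphi$ and to $\psi$, observing that adding a positive block-diagonal operator preserves positivity. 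This buys two things: you avoid the (tacit) invertibility of $\psi(A_i)$ that the Schur-complement step requires, and you dispense with the joint-convexity lemma that the paper only asserts as ``easy to see''; the paper's route, in exchange, records the explicit quadratic inequality $\varphi(A)\ge \bigl(\sum_i C_i^*X_iC_i\bigr)\psi(A)^{-1}\bigl(\sum_i C_i^*X_iC_i\bigr)^*$, which is of independent interest. The domain caveat you raise (spectra of $A,A_i$ possibly touching $\{0,1\}$ while $\varphi,\psi$ live on the open interval) is shared by the paper's own argument, and your suggested fix via $\varepsilon$-perturbation and continuity of the functional calculus is the standard remedy; note also that the unital Jensen inequality you use ($\sum_i C_i^*C_i=I$) indeed needs only matrix concavity on the interval, with no condition at $0$, so that step is sound.
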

\begin{proof}
  Let  $C_i\in\mathbb{B}(\mathcal{H})$ with $\sum_{i=1}^{k}C_i^*C_i=I$. If  $X_1,\ldots,X_k\in\mathcal{U}_{\varphi,\psi}$, then there exist $A_1,\ldots,A_n$ with $0\leq A_i\leq I$ $(i=1,\ldots,k)$ such that $\left[
\begin{array}{cc}
 \varphi(A_i) &  X_i\\
   X_i^* & \psi(A_i)
\end{array}\right]\geq0$,
for all $i=1,\ldots,k$. It follows that
\begin{align}\label{c2}
\varphi(A_i) \geq X_i \psi(A_i)^{-1}X_i^*, \qquad i=1,\ldots,k.
\end{align}
 Therefore
 \begin{align}\label{c3}
 \varphi\left(\sum_{i=1}^{k}C_i^*A_iC_i\right)&\geq \sum_{i=1}^{k}C_i^*\varphi(A_i)C_i \qquad\qquad (\mbox{by matrix concavity of $\varphi$})\nonumber\\
 &\geq \sum_{i=1}^{k}C_i^*X_i \psi(A_i)^{-1}X_i^*C_i\qquad (\mbox{by \eqref{c2}}).
\end{align}
It is easy to see that   the function $g(X,Y)=X\psi(Y)^{-1}X^*:\mathbb{B}(\mathcal{H})\times\mathbb{B}(\mathcal{H})^+\to\mathbb{B}(\mathcal{H})^+$ is jointly convex.  Hence
\begin{align}\label{c4}
  \left(\sum_{i=1}^{k}C_i^*X_iC_i\right)\psi\left(\sum_{i=1}^{k}C_i^*A_iC_i\right)^{-1}
  \left(\sum_{i=1}^{k}C_i^*X_i^*C_i\right)&=g\left(\sum_{i=1}^{k}C_i^*X_iC_i,\sum_{i=1}^{k}C_i^*A_iC_i\right)\nonumber\\
  &\leq  \sum_{i=1}^{k}C_i^*g(X_i,A_i)C_i\nonumber\\
  &=  \sum_{i=1}^{k}C_i^*X_i\psi(A_i)^{-1}X_i^*C_i.
\end{align}
Combining \eqref{c3} and \eqref{c4} we get
$$\varphi\left(\sum_{i=1}^{k}C_i^*A_iC_i\right)\geq \left(\sum_{i=1}^{k}C_i^*X_iC_i\right)\psi\left(\sum_{i=1}^{k}C_i^*A_iC_i\right)^{-1}
  \left(\sum_{i=1}^{k}C_i^*X_i^*C_i\right).$$
  It follows that
  $$\left[
\begin{array}{cc}
 \varphi(\sum_{i=1}^{k}C_i^*A_iC_i) &  \sum_{i=1}^{k}C_i^*X_i^*C_i\\
  \left(\sum_{i=1}^{k}C_i^*X_iC_i\right)^* & \psi(\sum_{i=1}^{k}C_i^*A_iC_i)
\end{array}\right]\geq0.$$
Noting that $0\leq \sum_{i=1}^{k}C_i^*A_iC_i\leq \sum_{i=1}^{k}C_i^*C_i=I$, we then conclude that \\ $\sum_{i=1}^{k}C_i^*X_iC_i\in~\mathcal{U}_{\varphi,\psi}$.
\end{proof}

Theorem \ref{mt} provides  a way of constructing  $M$-norms.
If we find any norm such that its unit ball coincide with $\mathcal{U}_{\varphi,\psi}$ for some proper matrix concave functions $\varphi$ and $\psi$, then that norm would be a $M$-norm by Lemma \ref{lem1}. An obvious example is the spectral norm $\|\cdot\|_\infty$ whose unit ball coincide with  $\mathcal{U}_{\varphi,\psi}$, where $\varphi(t)=\psi(t)=1$, since
 $$\|A\|_{\infty}\leq 1 \ \Longleftrightarrow \ \left[
\begin{array}{cc}
 I &  A\\
   A^* & I
\end{array}\right]\geq0.$$

As another example consider the  numerical radius norm $\omega(\cdot)$. It is known that \cite{Bhatia} that for
 $X\in\mathbb{M}_n$,  $\omega(X)\leq 1$ if and only if there exists a Hermitian
matrix $H$ with $-I\leq H\leq I$ such that
$\left[\begin{array}{cc}
I+H & X\\
X^* & I-H
\end{array}\right]$
is positive.

%%%%%%%%%%%%%%%%%%%%%%%%%%%%%%%%%%%%%%%
With  $A=\frac{I+H}{2}$ this concludes that    $0\leq A\leq I$ and
\begin{align}\label{q112}
\omega(X)\leq 1 \ \ \Longleftrightarrow \ \ \left[\begin{array}{cc}
2A & X\\
X^* & 2(I-A)
\end{array}\right]\geq 0, \ \mbox{for some}\  0\leq A\leq I.
\end{align}
If $\varphi(t)=2t$ and $\psi(t)=2(1-t)$, then $\varphi$ and $\psi$ are  matrix concave functions on $(0,1)$. Theorem \ref{mt} then ensures the $C^*$-convexity of $\mathcal{U}_{2t,2(1-t)}$. In addition, $\mathcal{U}_{2t,2(1-t)}=\left\{X;\quad \omega(X)\leq 1\right\}$  by \eqref{q112}. This implies that the numerical radius norm $\omega(\cdot)$ is a $M$-norm.

\bigskip

%%=================================================================================
%%%==========================================================================================


\begin{thebibliography}{99}

\bibitem{Bhatia} R. Bhatia, \textit{Positive Definite Matrices}. Princeton University Press, Princeton, 2007.


\bibitem{EW}  E.G. Effros and S. Winkler, \textit{Matrix Convexity: Operator Analogues of the Bipolar and Hahn–Banach Theorems}, J. Funct. Anal. \textbf{144} (1997),  117--152.

\bibitem{F} D. R. Farenick, \textit{$C^*$-extreme points of some compact $C^*$-convex sets}, Proc. Amer. Math. Soc. {\bf 118} (1993), 765--775.

\bibitem{FM} D.R. Farenick and P.B. Morenz, \textit{$C^*$-Extereme Points in the generalised state space of a
 $C^*$-algebra}, Trans. Amer. Math. Soc. \textbf{349} (1997), 1725--1748.


  \bibitem{FM2} D.R. Farenick and P.B. Morenz, \textit{$C^*$-Extereme Points of some compact $C^*$-convex sets}, Proc. Amer. Math. Soc., \textbf{118} (1993), 765--775.


 \bibitem{FZ} D. R. Farenick and  H. Zhou, \textit{The Structure of $C^*$-extreme points in space of completely positive linear maps on $C^*$-algebras}, Proc. Amer. Math. Soc., \textbf{126} (1998), 1467--1477.


\bibitem{HP} F. Hansen and G.K. Pedersen, \textit{Jensen's operator inequality}, Bull. London Math. Soc. \textbf{35} (2003), no. 4, 553--564.

\bibitem{Hop} A. Hopenwasser, R. L. Moore, V. I. Paulsen, \textit{$C^*$-extereme points}, Trans. Amer. Math. Soc. {\bf 163} (1981), 291--307.


\bibitem{Loebl} R. I. Loebl and V. I. Paulsen, \textit{Some remarks on $C^*$-convexity}, Linear Algebra Appl. {\bf 35} (1981), 63--78.

\bibitem{Morenz} P. B. Morenz, \textit{The structure of $C^*$-convex sets}, Canad. Math. J. Math. {\bf 46} (1994),  1007--1026.

\bibitem{Magajna} B. Magajna, \textit{$C^*$-convex sets and completely bounded bimodule homomorphisms}, Proc. Roy. Soc. Endinburgh Sect. A. {\bf 130} (2000), 375-387.

\bibitem{Magajna2} B. Magajna, \textit{$C^*$-Convexity and the Numerical Range}, Canad. Math. Bull., \textbf{43} (2000), 193--207.

 \bibitem{M} B. Magajna, \textit{On $C^*$-extereme points}, Proc. Amer. Math. Soc. \textbf{129} (2000), 771--780.

\bibitem{WW} C. Webster and S.  Winkler, \textit{The Krein--Milman Theorem in operator convexity}, Trans. Amer. Math. Soc. \textbf{351} (1999), 307--322.


\end{thebibliography}
\end{document}